\newcommand{\N}{\ensuremath{\mathbb{N}}}
\theoremstyle{plain}           	
\newtheorem{theorem}{Theorem}[section]
\newtheorem{lemma}[theorem]{Lemma}
\newtheorem{corollary}[theorem]{Corollary}
\newtheorem{definition}[theorem]{Definition}
\newtheorem{conjecture}[theorem]{Conjecture}
\newtheorem*{claim}{Claim}
\theoremstyle{remark}
\newtheorem{rem}[theorem]{Remark}
\title{Ramsey goodness of bounded degree trees}
\date{}
\author{Igor Balla  \thanks{Department of Mathematics, ETH, 8092 Zurich. igor.balla@math.ethz.ch.} \and
Alexey Pokrovskiy \thanks{Department of Mathematics, ETH, 8092 Zurich. dr.alexey.pokrovskiy@gmail.com.} \and
Benny Sudakov \thanks{Department of Mathematics, ETH, 8092 Zurich. benjamin.sudakov@math.ethz.ch. Research supported in part by SNSF grant 200021-149111.}}
\begin{document}

\maketitle 

\begin{abstract}
Given a pair of graphs $G$ and $H$, the Ramsey number $R(G,H)$ is the smallest $N$ such that every red-blue coloring of the edges of the complete graph $K_N$ contains a red copy of $G$ or a blue copy of $H$. If a graph $G$ is connected, it is well known and easy to show that $R(G,H) \geq (|G|-1)(\chi(H)-1)+\sigma(H)$, where $\chi(H)$ is the chromatic number of $H$ and $\sigma(H)$ is the size of the smallest color class in a $\chi(H)$-coloring of $H$. A  graph $G$ is called $H$-good if $R(G,H)= (|G|-1)(\chi(H)-1)+\sigma(H)$. The notion of Ramsey goodness was introduced by Burr and Erd\H{o}s in 1983 and has been extensively studied since then.

In this paper we show that  if $n\geq \Omega(|H| \log^4 |H|)$ then every $n$-vertex bounded degree tree $T$ is $H$-good. 
The dependency between $n$ and $|H|$ is tight up to $\log$ factors.
This substantially improves a result of Erd\H{o}s, Faudree, Rousseau, and Schelp from 1985, who proved that $n$-vertex bounded degree trees are $H$-good when when $n \geq \Omega(|H|^4)$.
\end{abstract}

\section{Introduction}
For a pair of graphs $G$ and $H$, the Ramsey number $R(G, H)$ is defined to be the minimum $N$ such that every red-blue coloring of the edges of the complete graph $K_N$ contains a red copy of $G$ or a blue copy of $H$. An old theorem of Ramsey states that $R(K_n, K_n)$ is finite and therefore $R(G, H)$ is well-defined for any $G, H$. It is sometimes quite difficult to compute the Ramsey number. Indeed, the inequalities 
\[ 2^{n/2} \leq R(K_n, K_n) \leq 4^n\]
were proven by Erd\H{o}s and Szekeres \cite{ErdSze35} in 1935, and Erd\H{o}s \cite{Erd47} in 1947, and there have not been any improvements to the constant in the exponent for either bound since then.

However, there are graphs for which we can compute the Ramsey number exactly. Erd\H{o}s \cite{Erd47} showed that for a path $P_n$ on $n$ vertices, we have $R(P_n, K_m) = (n-1)(m-1) + 1$. The lower bound comes from considering the graph composed of $m-1$ disjoint red cliques of size $n-1$, with all edges between them blue. This lower bound construction was generalized by Burr \cite{Bur81}, who observed that for any connected graph $G$ and any graph $H$,
\begin{equation}\label{RamseyLowerBound}
R(G,H) \geq  (|G| - 1)(\chi(H) - 1) + \sigma(H).
\end{equation}
where $\chi(H)$ is the chromatic number of $H$ and $\sigma(H)$ is the size of the smallest color class in a $\chi(H)$-coloring of $H$. To see that \cref{RamseyLowerBound} holds, consider the graph composed of $\chi(H) - 1$ disjoint red cliques of size $|G|-1$ and one additional red clique of size $\sigma(H) - 1$, with all edges between the cliques blue. This graph has no red copy of $G$ because every red connected component has size at most $|G| - 1$, and it has no blue copy of $H$ because otherwise this copy would be partitioned, via the red cliques, into $\chi(H)$ parts with one part having size $\sigma(H) - 1$, contradicting the minimality of $\sigma(H)$.

We say that $G$ is $H$-good when equality holds in \cref{RamseyLowerBound}. The notion of Ramsey goodness was introduced by Burr and Erd\H{o}s \cite{BE} in 1983, and has been studied extensively since then, see e.g., \cite{ABS, CFLS, FGMSS, Nik, NR} and their references. Note that Erd\H{o}s' argument which gives a lower bound on $R(K_n, K_n)$ can be used to show that if we have relatively dense graphs $G,H$, then the Ramsey number is super-polynomial in $|G|$ and hence $G$ is not $H$-good. Thus we restrict our attention to sparse and connected $G$. In 1977, Chv\'{a}tal \cite{Ch} showed that any tree is $K_m$-good. Recently, Pokrovskiy and Sudakov \cite{PSpaths} showed that any path $P$ with $|P| \geq 4|H|$ is $H$-good, verifying a conjecture of Allen, Brightwell, and Skokan \cite{ABS} in a strong sense. 

Since paths are a special case of bounded degree trees, it is natural to consider whether trees are Ramsey good for all graphs $H$. 
In~\cite{EFRSTreeMultipartite} Erd\H{o}s, Faudree, Rousseau and Schelp ask ``What is the behavior of $R(T, K(n, n))$ when $T$ has bounded degree?''
Erd\H{o}s, Faudree, Rousseau and Schelp \cite{EFRSTreeMultipartite, EFRSMultipartiteSparse, EFRSMultipartiteTree} wrote several papers on this topic. The result in their 1985 paper \cite{EFRSMultipartiteSparse} implies that for any $H$, all sufficiently large bounded degree trees $T$ are $H$-good.
Though they do not give an explicit dependency between $|T|$ and $|H|$, their proof method can be used to show that  any bounded degree tree $T$ with $|T| \geq \Omega(|H|^4)$, is $H$-good. 
In this paper, we improve their result as follows.
 
\begin{theorem} \label{Tmain}
For all $\Delta$ and $k$ there exists a constant $C_{\Delta, k}$ such for any tree $T$ with max degree at most $\Delta$ and any $H$ with $\chi(H) = k$ satisfying $|T| \geq  C_{\Delta, k} |H| \log^4|H|$, $T$ is $H$-good.
\end{theorem}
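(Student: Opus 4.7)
The plan is, for any red--blue coloring of the edges of $K_N$ with $N = (|T|-1)(k-1) + \sigma(H)$, to produce either a red copy of $T$ or a blue copy of $H$. I would organize everything around a structural dichotomy on the red graph: either it contains a subgraph on at least $|T|$ vertices with sufficient expansion to host any bounded-degree tree, in which case we embed $T$ in red; or it is very close to the extremal configuration of at most $k-1$ disjoint cliques of size $|T|-1$, in which case the blue graph is close to the complete $(k-1)$-partite-plus-extras structure in which $H$ can be found.

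The first ingredient I would develop is a tree-embedding lemma for bounded-degree trees in the spirit of Friedman--Pippenger and Haxell. A host graph of order $m$ in which every not-too-large set $S$ has neighborhood of size at least $(\Delta+1)|S|$ contains every tree on $\leq m$ vertices of maximum degree $\leq \Delta$, embeddable greedily with a matching reserved for the descendants of each used vertex. The $\log^4|H|$ factor in the hypothesis should enter through two places: the slack required in the greedy embedding, and the concentration bounds used when passing to a random-like subset of vertices to clean up the expansion condition in the red graph.

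The second step is to extract the dichotomy. If the red graph has no expanding piece of the kind just described, I would show that almost every vertex lies in a small red ``cluster'' with few outside red edges, and then carefully merge these clusters to obtain a partition of nearly all vertices into at most $k-1$ parts, each of size strictly less than $|T|$, with almost every cross-edge blue. Given such a partition, together with the $\geq \sigma(H)$ remaining vertices, a simple greedy embedding of the fixed graph $H$ places its $k-1$ largest color classes in the large blue-complete parts and its smallest class (of size $\sigma(H)$) in the leftover. This is possible because each part has size $\gg |H|$, so $H$ can be located inside the near-complete $k$-partite blue structure.

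The main obstacle will be the near-extremal case, where the red graph looks almost exactly like the lower bound construction of $k-1$ cliques of size $|T|-1$ plus a clique of size $\sigma(H)-1$. Here there is essentially no slack in the blue graph beyond the $\sigma(H)$ extra vertices, so one must leverage \emph{any} deviation of red from the extremal configuration very efficiently---either to embed $T$ in red, or to produce precisely the few extra blue edges needed to locate a $k$-chromatic blue $H$ using the extras as the smallest color class. Coupling the tree-embedding lemma and the blue-$H$-embedding lemma through this narrow regime while keeping the tree-size threshold down to $O(|H|\log^4|H|)$, rather than the previous $|H|^4$ of Erd\H{o}s--Faudree--Rousseau--Schelp, is where I expect the bulk of the technical difficulty to lie.
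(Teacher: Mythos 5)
Your high-level dichotomy (expanding red subgraph versus near-extremal clique-like red structure) is a reasonable opening move and is loosely related to how the paper starts, but as stated it will not get the threshold down to $|T|\gtrsim |H|\log^4|H|$, and the missing ingredient is the one that does all of the real work. Haxell/Friedman--Pippenger-type expansion lemmas embed a bounded-degree tree $T$ only into a host that has a linear-in-$|T|$ amount of ``room'' beyond $|T|$: the greedy-plus-reserved-matching argument you describe degrades when the tree is nearly spanning, and in this problem the red piece you can hope to isolate has order only $|T|+O(|H|)$, i.e., $|T|(1+o(1))$. So the first half of your dichotomy does not close without a mechanism for embedding \emph{almost-spanning} trees, and that is exactly what your proposal leaves out. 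The paper's proof is organized around a further dichotomy on $T$ itself (Lemma~\ref{L1}): either $T$ has $\Omega(n/\log^2 n)$ leaves, in which case one embeds $T$ minus its leaves with Haxell and finishes the leaves with a Hall-type lemma; or $T$ has many long bare paths, in which case one embeds $T$ minus the interiors of those paths and then uses Montgomery's expander path-cover/linking theorems (\Cref{T1}, \Cref{L2}) to route the paths. The $\log^4$ factor comes precisely from the expansion parameter $d\asymp\log^4 n/\log\log n$ that Montgomery's Theorem~4.3 needs, not from ``concentration bounds for a random-like subset'' as you suggest.

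Two further points. First, for $k\geq 3$ your ``near-extremal'' branch is too crude: the paper does not jump straight from ``no expander'' to ``$(k-1)$ near-cliques''; there is an intermediate regime where one extracts $k-1$ large sets with no edges between them, builds a \emph{linked system} inside each, and must decide whether there are enough short paths across the sets to combine them into one larger linked system (Lemmas~\ref{LemmaJoinTwoLinkedSets}, \ref{LemmaJoinManyLinkedSets}). Only when this fails does the configuration truly degenerate to the Burr-type extremal picture handled in Lemma~\ref{L9}. Second, your plan in the near-extremal case is to ``greedily embed $H$ in blue''; the paper instead proves the stronger statement that $T$ is $K_{m_1,\dots,m_k}$-good (\Cref{TmainKmmmm}), so it never needs to locate $H$ itself---it only needs to exhibit a blue $K_{m_1,\dots,m_k}$, which in the degenerate case drops out directly from counting leftover vertices once each cleaned part has order $\leq n-1$. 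Reformulating the target as $K_{m_1,\dots,m_k}$-goodness is a small but useful simplification you may want to adopt.
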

The dependency between $|T|$ and $|H|$ in the above theorem is tight up to the  $\log|H|$ factors. 
Indeed  for $|T|\leq m=|K_m^k|/k$, no tree $T$ is $K_{m}^k$-good for the balanced complete multipartite graph $K_m^k$. To see this, consider, an edge colouring of a complete graph on $(2k-1)(|T|-1) + 1$ vertices 
consisting of $2k-1$ red cliques of size $|T|-1$, with all other edges blue. 
It is easy to check that this graph has no red $T$ and no blue  $K^k_m$ showing that $R(T, K_m^k)\geq (2k-1)(|T|-1) + 1>(k-1)(|T|-1)+m$.

In the proof of \Cref{Tmain}, we first consider the case where our tree $T$ has many leaves. In this case, we are able to obtain the following stronger result.

\begin{theorem} \label{Tmanyleaves}
Let $T$ be a tree with $l$ leaves and maximum degree at most $\Delta$, and let $H$ be a graph satisfying $l \geq 13 \Delta |H| + 1$. Then $T$ is $H$-good.
\end{theorem}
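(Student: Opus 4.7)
Set $n = |T|$, $k = \chi(H)$, $h = |H|$, $\sigma = \sigma(H)$, and $N = (n-1)(k-1) + \sigma$. Consider an arbitrary red/blue colouring of $E(K_N)$ with no blue copy of $H$; the goal is to produce a red copy of $T$. My strategy has three stages: \textbf{(i)}~extract a subset $W \subseteq V(K_N)$ with $|W| \geq n$ in which every vertex has blue degree of order $h$; \textbf{(ii)}~greedily embed the skeleton $T'$ (the tree obtained from $T$ by deleting all leaves) into $W$; \textbf{(iii)}~attach the $l$ leaves via Hall's theorem. The hypothesis $l \geq 13\Delta|H|+1$ supplies the slack needed for Stages (ii) and (iii).

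\textbf{Stage (i): red structure.} I construct $W$ by iteratively removing from $V(K_N)$ any vertex whose blue degree in the current set exceeds a threshold $B$ of order $h$. If the pruning halts with $|W| \geq n$, we have what we want. Otherwise the set of pruned vertices $D$ satisfies $|D| \geq N - n + 1$, and each $v \in D$ has at least $B + 1$ blue neighbours among the vertices still present at the moment of its removal. Using this accumulated blue structure across $D$ and $W = V(K_N) \setminus D$, one greedily locates a blue copy of $H$ by placing its $k$ colour classes (of total size $h$) one after another into appropriate high-blue-degree strata of $D$, contradicting the no-blue-$H$ assumption.

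\textbf{Stage (ii): embedding the skeleton.} Since $|T'| = n - l \leq n - 13\Delta h - 1$, order $V(T')$ as $u_1, \ldots, u_{n-l}$ by a BFS rooted at any internal vertex of $T'$, so each $u_i$ ($i \geq 2$) has a unique earlier neighbour $u_{p(i)}$. Embed greedily into $W$: at step $i$, the image of $u_{p(i)}$ has red degree at least $|W| - 1 - B$ in $W$ by Stage (i), while at most $n - l - 1$ vertices of $W$ are already used, leaving at least $l - h$ fresh red neighbours as candidates for the image of $u_i$---comfortably positive.

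\textbf{Stage (iii): attaching leaves, and main obstacle.} For each $v \in \mathrm{Im}(T')$ let $d_v \leq \Delta$ denote the number of $T$-leaves attached to the preimage of $v$, so $\sum_v d_v = l$. Consider the bipartite red graph $R^*$ between $\mathrm{Im}(T')$ and $W \setminus \mathrm{Im}(T')$. By Hall's theorem, a system of $d_v$ distinct representatives for each $v$ exists provided $\bigl|\bigcup_{v \in S} N_{R^*}(v)\bigr| \geq \sum_{v \in S} d_v$ for every $S \subseteq \mathrm{Im}(T')$. Split on $|S|$: if $|S| \leq B$ then $\sum_{v \in S} d_v \leq \Delta B$, dominated by the red neighbourhood of a single $v \in S$ (of size $\geq l - B$ by Stage (i)), and the inequality $l - B \geq \Delta B$ follows from $l \geq 13\Delta h$; if $|S| > B$ then no vertex of $W \setminus \mathrm{Im}(T')$ can be blue-adjacent to all of $S$ (by the max-blue-degree bound from Stage (i)), so $\bigcup_{v \in S} N_{R^*}(v) = W \setminus \mathrm{Im}(T')$, which has size $\geq l \geq \sum_v d_v$. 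Placing the leaves accordingly completes a red copy of $T$. The dominant difficulty is Stage (i): quantifying the embedding of $H$ into the blue structure generated by the pruned vertices, with constants sharp enough to close out against the hypothesis $l \geq 13\Delta|H|+1$.
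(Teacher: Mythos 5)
Your approach diverges substantially from the paper's (which uses an induction on $\chi(H)$, a set-expansion property derived from $K_{m_1,m_2}$-freeness, and Haxell's tree-embedding theorem rather than a greedy BFS). Unfortunately, the divergence is where the proof breaks.

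The fatal gap is Stage~(i). You want to prune to a set $W$ of size $\geq n$ in which every vertex has blue degree $\leq B$ with $B = O(|H|)$, and you claim that if this fails then the pruned vertices yield a blue copy of $H$. This is false: a graph whose complement is $K_{m_1,m_2}$-free need \emph{not} have any subset on which blue degrees are bounded by $O(|H|)$. Concretely, take $H = K_{2, h-2}$, so $k = 2$, $\sigma = 2$, $N = n+1$, and let the blue graph on $K_N$ be the incidence graph of a projective plane of order $q$ (with $N = 2(q^2+q+1)$). This blue graph is $K_{2,2}$-free, hence contains no blue $K_{2,h-2} = H$, yet it is $(q+1)$-regular with $q+1 \approx \sqrt{N/2}$. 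Once $n \gtrsim h^2$ this is far above any threshold $B = O(h)$, so your pruning removes \emph{every} vertex without ever exposing a blue $H$. In this example the theorem is still true (the red graph, being the complement, is nearly complete and contains $T$ easily), but your proof never reaches Stages~(ii) or~(iii). The same obstruction propagates: Stage~(ii)'s greedy BFS and Stage~(iii)'s Hall check both rely on the nonexistent blue-degree bound. Moreover the tension is irreparable: $K_{m,m}$-freeness permits minimum blue degree $\Theta(N^{1-1/m})$, so any workable threshold $B$ would have to grow like $\sqrt{n}$, which then destroys the Hall inequality $l \geq (\Delta+1)B$ under the hypothesis $l \geq 13\Delta|H| + 1$.

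The correct mechanism (used in the paper) is that $K_{m_1,m_2}$-freeness of the complement gives a \emph{set}-expansion property — every vertex set $S$ of size at least $m_1$ has red neighbourhood of size at least $|G| - |S| - m_1 - m_2$ — rather than a vertex-degree bound. The paper then deletes a small exceptional set of size $< m_1$ (chosen maximal among sets that fail to expand) to upgrade this to expansion for \emph{all} small sets, applies Haxell's theorem (\Cref{L3}/\Cref{C1}) to embed $T'$ into the resulting expander, and verifies the Hall condition for attaching leaves again via set-expansion (small $S$ handled by the expansion guarantee in \Cref{C1}, large $S$ handled by $K_{m_k,m'}$-freeness). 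It also first performs an induction on $k = \chi(H)$ (\Cref{L5}) to reduce to a bipartite-type freeness condition; your proposal skips this reduction entirely, which is a second, independent gap when $k \geq 3$. You would need to replace Stage~(i) wholesale by a set-expansion argument of this kind for the proof to go through.
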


\begin{rem}
The condition $l \geq 13 \Delta |H| + 1$ can be replaced with $l \geq 13 \Delta m + 1$ where $m$ is the size of the largest color class in a $\chi(H)$ coloring of $H$. Indeed, this is what we actually prove in \Cref{L5}.
\end{rem}

\section{Overview}

\subsection*{Notation}

For a graph $G$, we let $E(G)$ denote the set of edges of $G$. We define $K^k_m$ to be the complete $k$-partite graph with each part having size $m$, where we let $K^1_m$ denote the empty graph on $m$ vertices. Also let $K_{m_1, \ldots, m_k}$ be the complete multipartite graph with parts of size $m_1, \ldots, m_k$. For a graph $G$ and vertex $x$, we let $N(x) = N_G(x) = \{ y \in G : xy \in E(G) \}$ denote the neighborhood of $x$. We analogously let $d_G(x) = |N_G(x)|$ denote the degree of $x$ and $\Delta(G)$ denote the maximum degree of a vertex in $G$. For any subset $S \subseteq G$, we define the neighborhood $N(S) = N_G(S) = \bigcup_{x \in S}{N_G(x)} \backslash S$.

\subsection*{Proof outline}

We are given a tree $T$ with $n$ vertices and a graph $H$ with $\chi(H) = k$ and $\sigma(H) = m_1$, and we would like to show that any graph $G$ on $(n-1)(k-1) + m_1$ vertices either has a copy of $T$, or $G^c$ has a copy of $H$. Note that as long as $k$ and $m_1$ are fixed, adding more edges to $H$ only makes the problem more difficult. Indeed, if we let $m_1 \leq \ldots \leq m_k$ be the sizes of the parts in a $k$-coloring of $H$, then a graph not containing $H$ also doesn't contain $K_{m_1, \ldots, m_k}$.
Because of this we will actually prove the following slightly stronger version of Theorem~\ref{Tmain}.
\begin{theorem} \label{TmainKmmmm}
For all $\Delta$ and $k$ there exists a constant $C_{\Delta, k}$ such for any tree $T$ with max degree at most $\Delta$ and numbers $m_1\leq  m_2\leq  \dots\leq  m_k$ with   $|T| \geq  C_{\Delta, k} m_k \log^4 m_k$, the tree $T$ is $K_{m_1, m_2, \dots, m_k}$-good.
\end{theorem}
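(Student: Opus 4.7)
The plan is to split on the number of leaves $l$ of $T$. When $l \geq 13\Delta m_k + 1$, the Remark following Theorem~\ref{Tmanyleaves} (replacing $|H|$ by the largest color class size, which is $m_k$ for $H = K_{m_1,\ldots,m_k}$) gives $K_{m_1,\ldots,m_k}$-goodness directly, so we may assume $l \leq 13\Delta m_k$. Under this assumption the topological skeleton of $T$ is small: suppressing the internal degree-$2$ vertices yields a tree $T_0$ with at most $2l-2 = O(\Delta m_k)$ vertices, and $T$ is recovered from $T_0$ by subdividing each edge into a bare path of some length. By absorbing every bare path shorter than a threshold $\ell = \Theta(\log^4 m_k)$ back into the skeleton, we obtain a decomposition of $T$ into a small ``enlarged skeleton'' $T_0'$ of size $O(\Delta m_k \ell) = o(n)$, together with $O(\Delta m_k)$ internally vertex-disjoint bare paths, each of length at least $\ell$.

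Next I would establish a stability statement for any $2$-coloring of $K_N$, where $N = (n-1)(k-1) + m_1$, that avoids a blue $K_{m_1,\ldots,m_k}$: up to a very small number of vertices, $V(K_N)$ admits a partition $V_1 \sqcup \cdots \sqcup V_{k-1} \sqcup W$ with $|V_i| \approx n-1$ and $|W| \approx m_1$, such that the red graph induced on each $V_i$ has minimum red degree at least $(1-\varepsilon)|V_i|$, while most edges between the $V_i$'s are blue. This partition is forced by comparing with the extremal construction: iteratively locating blue bipartite pieces via a Kővári--Sós--Turán-type argument and assembling them into $K_{m_1,\ldots,m_k}$ whenever the partition fails. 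The quantitative error $\varepsilon$ is what constrains how large $n$ must be in terms of $m_k$.

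With this structure in hand, I would embed $T$ into $V_1 \cup \{w\}$ for some $w \in W$ with many red neighbors in $V_1$ (such a $w$ must exist, else $W$ produces the missing color class of a blue $K_{m_1,\ldots,m_k}$). The embedding proceeds in two phases: first embed the enlarged skeleton $T_0'$ greedily using the high minimum red degree in $V_1$; then fill in each long bare path between prescribed endpoints already embedded. The heart of the argument is a path-connecting lemma for red-dense graphs: in any graph on $V_1$ with minimum degree at least $(1-\varepsilon)|V_1|$, any prescribed collection of $O(\Delta m_k)$ disjoint pairs $(x_e, y_e)$ can be simultaneously joined by internally disjoint red paths of prescribed lengths $\ell_e$ with $\sum \ell_e$ covering almost all of $V_1$. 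The natural tool here is absorption: pre-reserve a small absorber set $A \subset V_1$ that can lengthen any prescribed $x_e y_e$-path to any chosen length in a wide range, then route short $x_e y_e$-paths through $A$.

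The main obstacle I anticipate lies in the quantitative interplay between the stability step and the path-connecting lemma. We cannot afford $\varepsilon$ much larger than $1/\mathrm{poly}(\log m_k)$ if we want $n = O_{\Delta,k}(m_k \log^4 m_k)$ to leave enough room in $V_1$ for $O(\Delta m_k)$ simultaneous absorbed connections, the skeleton, and the leftover absorbers. Each of the stability step (one Kővári--Sós--Turán loss), the absorber reservation, the counting of admissible $x_e y_e$-paths, and the skeleton embedding will plausibly contribute a single $\log m_k$ factor, together explaining the $\log^4 m_k$ overhead in the statement. Carefully tracking these four losses and verifying that they do not compound further — in particular, that the absorption step tolerates the $\varepsilon$ produced by the stability step without demanding a fifth $\log$ — is where the bulk of the technical work will live.
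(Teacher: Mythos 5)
Your split into the many-leaves case (via the Remark after Theorem~\ref{Tmanyleaves}, equivalently Lemma~\ref{L5}) and the few-leaves case, together with the decomposition of $T$ into a small skeleton plus long bare paths, matches the paper's starting point. The gap is in the ``stability'' step. You claim that \emph{any} $2$-colouring of $K_N$ with no blue $K_{m_1,\ldots,m_k}$ admits, after deleting few vertices, a partition $V_1\sqcup\cdots\sqcup V_{k-1}\sqcup W$ with each $|V_i|\approx n-1$, high minimum red degree inside each $V_i$, and mostly blue edges between the $V_i$. This is false: if the colouring is entirely red (so the blue graph is empty and trivially $K_{m_1,\ldots,m_k}$-free), or if the red graph is merely a good expander, no such partition exists. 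Your fallback---``whenever the partition fails, assemble a blue $K_{m_1,\ldots,m_k}$ by Kővári--Sós--Turán''---does not repair this, because the partition can fail precisely when the red graph is too dense or too well-connected, and then there is no blue multipartite graph to find; the correct conclusion in that regime is that the red graph contains $T$ directly. What is actually needed, and what the paper provides, is a genuine dichotomy rather than a one-sided stability result: either the red graph has enough expansion/connectivity to embed $T$ outright, or it is near-extremal. For $k\geq3$ the paper first locates a $K^{k-1}_q$ in the blue graph via Lemma~\ref{C2} (unless $T$ already embeds), trims each of its parts to an expander, and then checks whether there are many short red paths between the parts; if yes, the parts form a linked system and \Cref{LemmaLinkedKmkFreeTree} embeds $T$, and if no, the structure is near-extremal and Lemma~\ref{L9} applies. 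Without this case split, your argument collapses in the non-extremal regime.

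A secondary point: even granting the extremal partition, embedding the skeleton greedily into a set $V_1$ of size $\approx n-1$ and then routing $O(\Delta m_k)$ bare paths of prescribed lengths that must cover almost all of $V_1$ is a spanning-style embedding in a nearly tight host. Your absorption sketch is plausible, but it is doing the same work as the Haxell-type embedding (\Cref{L3}) plus Montgomery's path-covering theorem (\Cref{T1}) that the paper invokes, so it does not simplify the argument in the extremal case either; and the four-$\log$ budget you sketch (KST, absorber reservation, path counting, skeleton) has a different provenance from the paper's $\log^4 n$, which comes entirely from Montgomery's expansion requirement $d = \Theta(\log^4 n/\log\log n)$ in \Cref{T1}.
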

Assume that we are given a graph $G$ on $(n-1)(k-1) + m_1$ vertices such that $G^c$ has no copy of $K_{m_1, \ldots, m_k}$. To prove Theorem~\ref{TmainKmmmm} we need to show that $G$ has a copy of $T$.
Notice that since  $G^c$ has no copy of $K_{m_1, \ldots, m_k}$, we have that  $G^c$ has no copy of $K^k_{m_k}$ and most of the time we will only use this weaker assumption.

A bare path in a graph is a path such that all interior vertices have degree 2. It is a well known result (see eg. Lemma~2.1 in~\cite{Krivelevich}) that a tree either has many leaves or many long bare paths. 
\begin{lemma} \label{L1}
For any integers $n, r > 2$, a tree on $n$ vertices either has at least $n/4r$ leaves or a collection of at least $n/4r$ vertex disjoint bare paths of length $r$ each.
\end{lemma}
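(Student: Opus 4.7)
The plan is to split on the number of leaves $L$ of $T$. If $L \geq n/(4r)$ the first alternative holds, so assume $L < n/(4r)$ and aim to produce at least $n/(4r)$ vertex-disjoint bare paths of length $r$.

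I will exploit the topological structure of $T$. Let $B$ denote the number of branching vertices of $T$ (those with degree at least $3$). Combining the degree-sum identity $\sum_v d_T(v) = 2(n-1)$ with the fact that every non-leaf non-branching vertex has degree exactly $2$ gives the standard bound $B \leq L - 2$. Suppressing all degree-$2$ vertices of $T$ produces a tree $T'$ whose vertex set is precisely the $L + B$ leaves and branching vertices of $T$, and whose edges correspond bijectively to the maximal bare paths of $T$. In particular, the number of maximal bare paths is $t = L + B - 1 \leq 2L - 3 < n/(2r)$, while their lengths $\ell_1,\dots,\ell_t$ sum to $n-1$.

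Because there are few maximal bare paths but large total length, they are on average long. From each maximal bare path of length $\ell_i$ I can carve out $\lfloor (\ell_i+1)/(r+1)\rfloor$ vertex-disjoint bare sub-paths of length $r$ by taking consecutive blocks of $r+1$ vertices (the interior of each block consists of vertices of $T$-degree $2$, so each block is indeed a bare path of length $r$). Summing and using $\lfloor x\rfloor > x - 1$:
\[
\sum_{i=1}^{t} \left\lfloor \frac{\ell_i+1}{r+1}\right\rfloor \;\geq\; \frac{(n-1)+t}{r+1} - t \;=\; \frac{n-1-tr}{r+1} \;>\; \frac{n/2 - 1}{r+1}.
\]
A short calculation shows that this exceeds $n/(4r)$ whenever $n \geq 4r/(r-1)$, which for $r \geq 3$ is automatic as soon as $n \geq 6$; for smaller $n$ one has $n/(4r) < 1$ and the conclusion is vacuous.

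I do not expect a genuine obstacle. The two ingredients, namely the degree-sum inequality $B \leq L - 2$ (which forces the topologically reduced tree to be small) and the greedy partition of long bare paths into blocks of $r+1$ consecutive vertices, are both routine; the only minor point to check is that the resulting constant comes out as $1/(4r)$ rather than something slightly worse, which is just the arithmetic displayed above.
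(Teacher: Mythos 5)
The paper does not prove this lemma itself (it cites it to Lemma~2.1 of Krivelevich), but your plan --- the handshake bound $B \leq L-2$, the reduction to the $(L+B)$-vertex topological tree $T'$, and greedy blocking of each maximal bare path --- is indeed the standard route. The gap is in the vertex-disjointness of the blocks \emph{across} different maximal bare paths. Distinct maximal bare paths of $T$ can share a branching vertex as a common endpoint, and your blocking (consecutive $(r+1)$-vertex blocks starting from the first vertex of each maximal bare path) places that shared vertex in the first block of every incident path. For instance, if a degree-$3$ branching vertex is the common endpoint of three maximal bare paths each of length exactly $r$, your count gives one block to each, yet all three contain that shared vertex and only one can actually be used; summing the per-path counts therefore does not yield a disjoint collection of the claimed size.

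To repair this, root the reduced tree $T'$ and, for each maximal bare path $v_0 v_1 \cdots v_{\ell_i}$ with $v_0$ the endpoint nearer the root, carve blocks from $\{v_1,\ldots,v_{\ell_i}\}$ only. Each vertex of $T'$ then lies in a block of at most one path (namely the one corresponding to its parent edge in $T'$), so the blocks across all paths are genuinely disjoint, giving $\lfloor \ell_i/(r+1)\rfloor$ blocks from the $i$-th path. With this weaker count, however, your arithmetic needs the full strength of $t \leq 2L-3$ rather than just $t < n/(2r)$: the bound $\sum\lfloor \ell_i/(r+1)\rfloor \geq (n-1)/(r+1) - t$ combined with $t < n/(2r)$ alone reduces to $n(r-3)\geq 4r$, which fails outright at $r=3$. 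Keeping the $-3$ (so $t < n/(2r)-3$) restores the needed slack and makes the inequality hold for all $r\geq 3$; the small-$n$ cases are then covered because every tree on $n\geq 2$ vertices has at least two leaves, so the leaf alternative holds trivially for $n\leq 8r$.
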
 

So we structure our paper into two parts. In \cref{Smanyleaves}, we suppose our tree $T$ has many leaves. We first describe the case $k=2$, i.e. so that $H = K_{m_1, m_2}$ is a complete bipartite graph. Then we observe, as in \cite{PSpaths, PScycles}, that a graph whose complement does not contain a complete bipartite graph has the property that large sets expand. After removing a small number of vertices, we obtain a graph which is an expander. We then make use of a theorem of Haxell \cite{Hax01} in order to embed the tree without leaves in our expander, and a generalization of Hall's theorem  to connect the leaves and complete the embedding. We then proceed by induction on $k$. In particular, we prove \Cref{Tmanyleaves} as a corollary. 

In \cref{Sfewleaves}, we consider instead the case where our tree has few leaves, and therefore many long bare paths by \Cref{L1}. In \cref{k2} we consider the case $k=2$, and again obtain an expander as above. We will often need to find disjoint paths of prescribed length between pairs of vertices, so we make the following definition.

\begin{definition}
For two sets $X$ and $W$ in a graph, we say that $(X,W)$ is $(s, d^-, d^+)$-linked system if the following holds.
Suppose that we have distinct vertices $x_1, y_1, \dots, x_s, y_s \in X$, and integers $d_1, \dots, d_s$ with $d^-\leq d_i\leq d^+$ for all $i$. Then there are disjoint paths $P_1, \dots, P_s$ with $P_i$ going from $x_i$ to $y_i$, $P_i$ internally contained in $W$, and $P_i$ having length $d_i$.
\end{definition}

We then follow the approach of Montgomery \cite{Mo14}, who shows that an expander is a $(s, d^-, d+)$-linked system for some appropriate choices of $s, d^-, d^+$ (\Cref{L2} and \Cref{T1}.) Thus we first apply Haxell's theorem in order to embed the tree with the paths removed and then apply Montgomery's result to find the required bare paths, completing the embedding. We finish \cref{k2} by combining the results for trees with many leaves and few leaves, thereby verifying \Cref{TmainKmmmm} for $k = 2$.

In \cref{k3} we consider the case $k \geq 3$. We first find $k-1$ disjoint subsets in $G$ so that they are sufficiently large and there are no edges between any 2 parts, and then make each of the parts an expander by removing a few vertices.  Next we look for sufficiently many short (length at most 3) paths between the $k-1$ parts and create an auxiliary graph on $[k-1]$ where there is an edge between $i$ and $j$ iff there are sufficiently many short paths between parts $i$ and $j$. 

If the auxiliary graph is nonempty, we take any nonempty connected component of it and consider the subgraph consisting of the parts of our original graph corresponding to that component, together with the short paths between them. Since each part is an expander and therefore a linked system by Montgomery \cite{Mo14} and there are many short paths connecting the linked systems, we can conclude that the whole subgraph is a linked system (\Cref{LemmaJoinManyLinkedSets}.) By also considering the neighborhoods of the parts, we are able to find a copy of our tree with the paths removed, as well as the forest of those paths. We then use the linked system in order to connect the required paths, completing the embedding (\Cref{LemmaLinkedKmkFreeTree}.)

Otherwise if the auxiliary graph is empty, then the neighborhoods of the $k-1$ parts in our original graph are sets that have no edges between them and have size at least $.9n$, so that our graph is close to the extremal construction. This case is dealt with separately in \Cref{L9}. By removing a few vertices from each set, we make each set an expander. Now if there is a vertex $v$ outside of the sets that has at least $\Delta$ neighbors to at least 2 sets, then because we can find a vertex that separates the tree into 2 forests with size at most $2n / 3$, we can apply a generalization of Haxell's theorem (\Cref{L3}) to find the 2 forests in those 2 sets with roots being exactly the neighbors of $v$, thus finding a copy of $T$.

Otherwise if all vertices outside the sets have at least $\Delta$ neighbors to at most 1 set, then we can place them in the set in which they have the most neighbors. This creates a partition of $G$ into $k-1$ parts with the property that no vertex in a part has more than $\Delta$ neighbors to any other part. Finally, we remove a few vertices from each part to make them expanders. If all the parts have at most $n-1$ vertices, then we must have removed at least $m_1$ vertices, and so we can take these vertices together with appropriate subsets of size $m_2, \ldots, m_k$ of the $k-1$ parts to get a copy of $K_{m_1, \ldots, m_k}$ in $G^c$, a contradiction. Hence there must be some part with at least $n$ vertices. Since this part is also an expander and has no copy of $K^2_{m_k}$, we can apply the result for $k=2$ to obtain a copy of $T$.

\section{Embedding a tree with many leaves} \label{Smanyleaves}

To deal with the case where where our tree has many leaves, we will need a result of Haxell \cite{Hax01}, which lets us embed a bounded degree tree with prescribed root into a graph with sufficient expansion. In \cref{k3}, we will actually need a generalization of this result to forests, so we state the more general version in the following lemma. For a proof of \Cref{L3}, we refer the reader to the appendix. 

\begin{lemma} \label{L3}
Let $\Delta, M, t$ and $m$ be given. Let $X = \{x_1, \ldots, x_t\}$ be a set of vertices in a graph $G$. Suppose that we have rooted trees $T_{x_1}, \ldots, T_{x_t}$ satisfying $\sum_{i=1}^t{\left|T_{x_i}\right|} \leq M$ and $\Delta \left(T_{x_i} \right) \leq \Delta$ for all $i$. Suppose that for all $S$ with $m \leq |S| \leq 2m$ we have $|N(S)| \geq M + 10 \Delta m$, and for $S$ with $|S| \leq m$ we  have $|N(S)\setminus X|\geq 4\Delta|S|$.

Then we can find disjoint copies of the trees $T_{x_1}, \ldots, T_{x_t}$ in $G$ such that for each $i$, $T_{x_i}$ is rooted at $x_i$. In addition for all $S \subseteq T_{x_1} \cup \ldots \cup T_{x_t}$ with $|S| \leq m$, we have
\begin{equation*}
\left| N(S)\setminus \big(T_{x_1}\cup\dots\cup T_{x_t}\big) \right| \geq \Delta|S|.
\end{equation*}
\end{lemma}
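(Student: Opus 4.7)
My plan is to adapt Haxell's classical alternating-path embedding technique to this forest-with-prescribed-roots setting, and to strengthen its invariant so that the post-embedding expansion bound follows automatically. Write $F = T_{x_1} \cup \dots \cup T_{x_t}$ and $N = |V(F)| \leq M$. Fix a breadth-first ordering on each $T_{x_i}$ starting from $x_i$, and concatenate these to obtain an ordering $v_1, v_2, \ldots, v_N$ of $V(F)$ in which every non-root appears after its parent. The embedding $f$ will satisfy $f(x_i) = x_i$ and will be extended one vertex at a time in the above order.

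The core step is the extension: given a valid partial embedding and a next vertex $v$ whose parent $u$ is already embedded, pick $f(v) \in N(f(u))$ that is unused so far and not equal to any as-yet-unembedded $x_i$. If no direct choice works, we invoke Haxell's exchange. Build a chain $S_0 \subseteq S_1 \subseteq \cdots$ of subsets of $V(G)$ with $S_0 = \{f(u)\}$: at each step, if $N(S_k)\setminus X$ contains a vertex that is either unused, or is the image of an embedded tree vertex with an available sibling, we can trace back an alternating path and reroute the embedding to free a neighbor of $f(u)$. Otherwise every vertex of $N(S_k)\setminus X$ is a ``blocked'' image, which we absorb into $S_{k+1}$. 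While $|S_k| \leq m$, the hypothesis $|N(S_k)\setminus X|\geq 4\Delta|S_k|$ forces $S_k$ to grow by a constant factor, since at most $\Delta|S_k|$ of the candidates in $N(S_k)\setminus X$ can arise as tree-neighbors of $f^{-1}(S_k)$. Once $|S_k|\in[m,2m]$, the hypothesis $|N(S_k)|\geq M+10\Delta m$ leaves at least $10\Delta m$ unused vertices in $N(S_k)$ even after subtracting the entire image of $F$, so an unused vertex must exist and the alternating path closes.

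To deliver the additional property, I carry the following invariant throughout the construction: after each step, for every set $S$ of already-embedded tree vertices with $|S|\leq m$, $|N(f(S)) \setminus f(V(F'))| \geq \Delta|S|$, where $F'$ denotes the currently embedded prefix of $F$. Applied to $F' = F$ this is precisely the desired conclusion. Preserving the invariant uses the two expansion hypotheses in the same way: at medium scales ($|S|$ near $m$), the bound $|N(S)|\geq M+10\Delta m$ leaves ample headroom once we subtract the image of $F$; at small scales, $|N(S)\setminus X|\geq 4\Delta|S|$ together with the observation that at any moment at most $\Delta|S|$ of the candidates have already been claimed by tree-children of $f^{-1}(S)$ yields the bound directly. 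The main obstacle is the bookkeeping in the exchange step, namely that the rerouting never attempts to relocate one of the prescribed roots $x_i$; this is precisely the role of the ``$\setminus X$'' in the small-scale hypothesis, which lets us freeze the roots while still guaranteeing geometric growth of $S_k$ and, in parallel, permits the invariant on $N(f(S))\setminus f(V(F'))$ to be restored after each extension.
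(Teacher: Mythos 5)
Your plan and the paper's proof are genuinely different methods. The paper proves Lemma~\ref{L3} by strengthening it to \Cref{technical} and running the Friedman--Pippenger argument: induction on $\sum_i e(T_{x_i})$, one edge at a time, with a \emph{weighted} expansion invariant
\[
|\Gamma(S)\setminus X|\geq 4\Delta|S\setminus X|+ \sum_{x\in S\cap X} \bigl(d_{root}(T_x) + \Delta\bigr),
\]
and the key being that ``critical'' sets (those achieving equality) are closed under union, which yields a contradiction whenever no valid one-edge extension exists. There is no rerouting. You instead propose a vertex-at-a-time BFS extension with an alternating-path exchange when stuck, maintaining only the final conclusion as an invariant.

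As written, your argument has two gaps that I think are fatal rather than cosmetic. First, the exchange step is asserted rather than constructed: for trees (unlike bipartite matchings) it is not clear what ``trace back an alternating path and reroute the embedding'' means, how the reroute preserves the tree structure (each vertex must remain adjacent to the image of its parent), and above all how it preserves your invariant, since a reroute can change $f(V(F'))$ at many vertices simultaneously. The paper's method exists precisely to avoid this: by never rerouting and instead carrying a stronger invariant, the embedding is monotone and the only thing to check is that a single new vertex can be chosen. Second, your accounting at small scales is wrong. You bound the number of ``already claimed'' neighbours of $f(S)$ by the number of tree-children of $f^{-1}(S)$, which is at most $\Delta|S|$. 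But $f(V(F')) \cap N(f(S))$ is not controlled by tree-children alone: $f(S)$ may be adjacent in $G$ to arbitrarily many other embedded images (up to $M-|S|$ of them), so the deduction $|N(f(S))\setminus f(V(F'))| \geq 4\Delta|S| - \Delta|S|$ does not follow. This is exactly the issue the weighted invariant in \Cref{technical} is designed to handle: embedded vertices are absorbed into $X$, where they cost only $d_{root}+\Delta$ rather than $4\Delta$, so the invariant self-corrects as the embedding grows. With only the unweighted invariant, you have no mechanism to prevent the slack from being consumed by unrelated previously embedded vertices, and the extension step can fail even with the critical/exchange machinery, because $|S|=1$ no longer guarantees a free neighbour.

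To salvage this line you would need either to strengthen your running invariant to something like the paper's (e.g.\ charge already-embedded vertices less than $4\Delta$), or to argue that the alternating-path reroute can be performed \emph{and} re-establishes the invariant; neither is present in the proposal.
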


As a corollary, we can embed a large bounded degree tree into a graph whose complement does not contain $K_{m_1, m_2}$.

\begin{corollary} \label{C1}
Let $\Delta, m_1, m_2$ be integers, $T$ be a forest with $\Delta(T) \leq \Delta$, and $G$ a graph with  $|G| \geq |T| + 13\Delta m_1 + m_2$  such that $G^c$ does not contain $K_{m_1, m_2}$. Then $G$ contains a copy of $T$. Additionally, for all $S \subseteq T$ with $|S| \leq m_1$, we have 
\[ |N(S) \backslash T| \geq \Delta |S|. \]
\end{corollary}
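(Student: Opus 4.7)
The plan is to deduce \Cref{C1} as a direct application of \Cref{L3} to the graph $G$, choosing $M = |T|$, $m = m_1$, and $X = \{x_1, \ldots, x_t\}$ to be a set consisting of one root from each connected component of the forest $T$. Once this is set up, the entire proof reduces to verifying the two expansion hypotheses of \Cref{L3}, since its conclusion already contains exactly the expansion claim $|N(S) \setminus T| \geq \Delta |S|$ we need.

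For the medium-size hypothesis (sets with $m_1 \leq |S| \leq 2m_1$), I would use the $K_{m_1, m_2}$-free property of $G^c$ essentially tautologically. Since any $m_1$-subset of $S$ can have at most $m_2 - 1$ common non-neighbors (otherwise those, together with any $m_2$ common non-neighbors, would form a $K_{m_1, m_2}$ in $G^c$), the full set $S$ has at most $m_2 - 1$ common non-neighbors too. This gives $|N(S)| \geq |G| - |S| - m_2 + 1 \geq |G| - 2m_1 - m_2 + 1$, and combining with $|G| \geq |T| + 13\Delta m_1 + m_2$ yields $|N(S)| \geq |T| + (13\Delta - 2) m_1 + 1 \geq |T| + 10\Delta m_1$, as required.

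For the small-size hypothesis (sets with $|S| \leq m_1$), I would argue by contradiction. Suppose some $S$ of size $s \leq m_1$ satisfies $|N(S) \setminus X| < 4\Delta s$. Then $|N(S)| < 4\Delta s + |X| \leq 4\Delta m_1 + |T|$, and so $|V \setminus N[S]| \geq |G| - (4\Delta + 1)s - |T| \geq (9\Delta - 1) m_1 + m_2$. In the boundary case $s = m_1$, this immediately produces a $K_{m_1, m_2}$ in $G^c$ (take $S$ on one side and any $m_2$ of the common non-neighbors on the other), a contradiction. For $s < m_1$, the idea is to extend $S$ to a set $S'$ of size $m_1$ while keeping at least $m_2$ common non-neighbors alive; this can be carried out by a greedy procedure, adding at each step a vertex whose removal from the shrinking common non-neighborhood is minimized.

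The main obstacle is making this greedy extension argument airtight: one must verify that at every step of the extension a vertex with sufficiently small neighborhood into the current common non-neighborhood exists, and that after $m_1 - s$ such steps the surviving non-neighborhood still has size $\geq m_2$. The budget calculation works because $|V \setminus N[S]|$ is linear in $\Delta m_1$ while only $m_1$ vertices are added, but care is needed since the $K_{m_1, m_2}$-freeness of $G^c$ simultaneously forces the induced graph on the non-neighborhood to be dense, constraining how "non-expanding" each chosen vertex can be. Once the two hypotheses of \Cref{L3} are verified, its conclusion gives the desired embedding of $T$ in $G$ with the stated expansion property, completing the proof.
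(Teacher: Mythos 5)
Your plan to apply \Cref{L3} directly to $G$ cannot work, because the small-set expansion hypothesis $|N(S)\setminus X|\geq 4\Delta|S|$ for all $|S|\leq m_1$ is simply not guaranteed to hold in $G$, and the greedy extension you propose to derive a contradiction from a violating set is where the argument breaks. The point is that $K_{m_1,m_2}$-freeness of $G^c$ only constrains the common non-neighbourhood of sets of size at least $m_1$; it says nothing about sets of size $s<m_1$. A concrete counterexample: let $m_1=m_2=\Delta=2$ and let $G$ consist of one isolated vertex $v$ together with a large clique $K$, with $|G|\geq |T|+13\Delta m_1+m_2$. Then $G^c$ is $K_{2,2}$-free (any $K_{2,2}$ in $G^c$ would need two vertices with two common non-neighbours in $G$; the $v$-$K$ structure rules this out), yet $S=\{v\}$ has $|N(S)|=0$, so the expansion hypothesis fails. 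Moreover your greedy extension fails immediately on this example: $V\setminus N[S]=K$ is a clique, so adding any vertex $u\in K$ to $S$ annihilates the entire common non-neighbourhood in a single step, and no ``budget'' bound on how many non-neighbours can be lost per step is available --- indeed, as you yourself note, $K_{m_1,m_2}$-freeness forces the non-neighbourhood to be \emph{denser}, which only makes the degrees into it larger and the losses per step worse.

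The paper's proof handles exactly this by first \emph{deleting} a maximal set $X_0\subseteq G$ with $|X_0|\leq m_1-1$ and $|N_G(X_0)|\leq 4\Delta|X_0|$, then working in $G'=G\setminus X_0$. Maximality is the key: if $S\subseteq G'$ with $|S|\leq m_1$ had $|N_{G'}(S)|\leq 4\Delta|S|$, then $X_0\cup S$ would also have small neighbourhood, forcing $|X_0\cup S|\geq m_1$ by maximality, at which point the medium-size expansion (coming directly from $K_{m_1,m_2}$-freeness applied to a set of size at least $m_1$) gives a contradiction. Deleting at most $m_1-1$ vertices costs little: $|G'|$ is still large enough to verify the medium-size hypothesis of \Cref{L3}, and the embedding plus the claimed expansion $|N(S)\setminus T|\geq\Delta|S|$ transfer from $G'$ to $G$ since $N_{G'}(S)\subseteq N_G(S)$. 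Your medium-size verification is essentially correct and matches the paper; the gap is entirely in the small-size case, and the missing idea is this deletion-of-a-maximal-bad-set trick, without which the approach cannot succeed. (Also note the paper reduces to $T$ being a tree at the outset, which lets it take $X=\{x\}$ a single vertex in \Cref{L3}; your choice of $X$ as the set of component roots is workable but the $|X|\leq|T|$ bound is wasteful and avoidable.)
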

\begin{proof}
Since every forest $F$ is a subgraph of some tree on $|F|$ vertices, without loss of generality we may suppose that $T$ is a tree.

Since $G^c$ does not contain $K_{m_1, m_2}$, we have that for any $S \subseteq G$ with $m_1 \leq |S| \leq 2 m_1$, $|N_G(S)| \geq |G| - 2 m_1 - m_2$. Now if we choose $|X| \leq m_1-1$ maximal so that $|N_G(X)| \leq 4\Delta|X|$, then we claim that $G' = G \backslash X$ satisfies that for all $S \subseteq G'$ with $1 \leq |S| \leq m_1$, $|N_{G'}(S)| \geq 4 \Delta |S| + 1$. Indeed, for any $S \subseteq G'$ with $1 \leq |S| \leq m_1$, if $|N_{G'}(S)| \leq 4 \Delta |S|$ then $|N_G(X \cup S)|\leq |N_G(X)\cup N_G(S)| \leq 4 \Delta |X \cup S|$, so we must have $m_1 \leq |X \cup S| \leq 2 m_1$ by maximality of $X$. But then
\[ 8 \Delta m_1 \geq 4 \Delta |X \cup S| \geq |N_G(X \cup S)| \geq |G|- 2 m_1 - m_2,\]
contradicting the assumption of the lemma. For any $S \subset G'$ with $m_1 \leq |S| \leq 2 m_1$  we have
\[ |N_{G'}(S)| \geq |N_G(S)|-|X|\geq |N_G(S)| - m_1 \geq |G| - 3 m_1 - m_2 \geq |T| + 10 \Delta m_1.\]
Thus we may apply \Cref{L3} with the graph $G'$, $m=m_1$,  $X=\{x\}$ for any vertex $x$, and the tree $T_x=T$, to obtain that $G'$ contains a copy of $T$. Moreover, for all $S \subseteq T$ with $|S| \leq m_1$, we have
\[ |N_G(S) \backslash T| \geq |N_{G'}(S) \backslash T| \geq \Delta |S|.\]
\end{proof}

We will also need the following extension of Hall's theorem.

\begin{lemma} \label{L4}
Given a bipartite graph $(A,B)$ and a function $l : A \rightarrow \N$, if $|N(S)| \geq \sum_{v \in S}{l(v)}$ for all $S \subseteq A$ then the graph contains a forest $F$ such that $d_F(v) = l(v)$ for all $v \in A$ and $d_F(v) \leq 1$ for all $v \in B$.
\end{lemma}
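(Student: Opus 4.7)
The plan is to reduce to the ordinary Hall's marriage theorem by duplicating vertices on the $A$-side according to the function $l$. Specifically, I would construct an auxiliary bipartite graph $(A', B)$ where each vertex $v \in A$ is replaced by $l(v)$ copies $v^{(1)}, \ldots, v^{(l(v))}$, and every copy $v^{(i)}$ is joined to the same neighborhood $N(v) \subseteq B$ as $v$ had in the original graph. So $|A'| = \sum_{v \in A} l(v)$.

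Next I would verify that Hall's condition holds in $(A', B)$. Given any $S' \subseteq A'$, let $S \subseteq A$ be the set of original vertices that have at least one copy in $S'$. Then the neighborhood of $S'$ in $B$ is exactly $N(S)$, and $|S'| \leq \sum_{v \in S} l(v) \leq |N(S)|$ by hypothesis. Thus Hall's theorem yields a matching $M$ in $(A', B)$ saturating $A'$.

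Projecting $M$ back to $(A, B)$ by identifying each copy $v^{(i)}$ with $v$ gives a subgraph $F \subseteq (A,B)$ in which $d_F(v) = l(v)$ for every $v \in A$ (since each of the $l(v)$ copies is matched to a distinct vertex of $B$), and $d_F(b) \leq 1$ for every $b \in B$ (since $M$ is a matching). Finally, because every $B$-vertex has degree at most one in $F$, the graph $F$ is a disjoint union of stars centered at the vertices of $A$, hence a forest, as required.

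The only subtle point — and it is quite mild — is checking that different copies of the same $v \in A$ cannot be matched to the same element of $B$ and that consequently $d_F(v) = l(v)$ rather than something smaller; this is automatic from $M$ being a matching. Otherwise the argument is a clean reduction and there is no substantial obstacle.
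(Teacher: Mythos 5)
Your proof is correct. The paper states \Cref{L4} without proof, treating it as a known extension of Hall's theorem, so there is no paper argument to compare against. Your reduction---splitting each $v \in A$ into $l(v)$ copies with identical neighborhoods, verifying Hall's condition for the blown-up graph, extracting a saturating matching, and projecting back---is the standard and cleanest way to obtain this statement, and every step checks out: the matching forces the $l(v)$ copies of $v$ to hit distinct vertices of $B$, giving $d_F(v) = l(v)$ exactly, and the degree-$\leq 1$ constraint on $B$ rules out any cycle since a cycle in a bipartite graph would require some $B$-vertex of degree at least $2$.
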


We are now ready to prove that a bounded degree tree with sufficiently many leaves is $K_{m_1, \ldots, m_k}$-good.

\begin{lemma} \label{L5}
Let $l, \Delta, k \in \N$ and $m_1 \leq \ldots \leq m_k$ be given with $l \geq 13 \Delta m_k + 1$. Then any tree $T$ with $l$ leaves and $\Delta(T) \leq \Delta$ is $K_{m_1,\ldots, m_k}$ good.
\end{lemma}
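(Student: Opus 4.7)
The plan is to proceed by induction on $k$. Let $T':=T\setminus\{\text{leaves of }T\}$, a connected subtree on $n-l$ vertices (leaves have degree $1$ and so never lie in the interior of any path, so their removal preserves connectivity); for $v\in T'$ write $l(v)\le\Delta$ for the number of leaves of $T$ attached to $v$ and set $A:=\{v\in T':l(v)>0\}$. The general strategy is to embed $T'$ in $G$ with good expansion and then attach the leaves via \Cref{L4}.

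For the base case $k=2$, the condition $l\ge 13\Delta m_2+1$ together with $m_1\le m_2$ gives $|G|=n-1+m_1\ge|T'|+13\Delta m_1+m_2$, so \Cref{C1} embeds $T'$ in $G$ with $|N_G(S)\setminus T'|\ge\Delta|S|$ for all $S\subseteq T'$ with $|S|\le m_1$. Applying \Cref{L4} to the bipartite graph induced by $G$ between $A$ and $B:=V(G)\setminus T'$ with leaf demand $l$ then attaches the leaves, provided Hall's condition $|N_G(S)\cap B|\ge\sum_{v\in S}l(v)$ holds for all $S\subseteq A$. I would verify this in three regimes: $|S|\le m_1$ from the C1 expansion; $m_1<|S|<m_2$ from the $K_{m_1,m_2}$-freeness of $G^c$ giving $|V\setminus(S\cup N_G(S))|\le m_2-1$, hence $|N_G(S)\cap B|\ge l+m_1-m_2\ge\Delta(m_2-1)\ge\sum l(v)$ under $l\ge 13\Delta m_2+1$; and $|S|\ge m_2$ from the symmetric bound $|V\setminus(S\cup N_G(S))|\le m_1-1$ giving $|N_G(S)\cap B|\ge l\ge\sum l(v)$.

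For the inductive step $k\ge 3$, I first note that if $G^c$ avoids $K_{m_2,\ldots,m_k}$, then the induction hypothesis (with parts $m_2,\ldots,m_k$, leaf requirement $l\ge 13\Delta m_k+1$) applied to $G$ — whose order $(n-1)(k-1)+m_1\ge(n-1)(k-2)+m_2$ — gives $T$ in $G$. Otherwise $G^c$ contains $K_{m_2,\ldots,m_k}$ on disjoint parts $I_2,\ldots,I_k$; choose $I_k$ among sets realizable as the $m_k$-part of such a multipartite in $G^c$ so as to minimize $|N_G(I_k)|$, and set $G':=G[V\setminus(I_k\cup N_G(I_k))]$. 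Every vertex of $G'$ is $G^c$-complete to $I_k$, so any $K_{m_1,\ldots,m_{k-1}}$ in $(G')^c$ would extend via $I_k$ to $K_{m_1,\ldots,m_k}$ in $G^c$, a contradiction. The induction hypothesis applied to $G'$ (with parts $m_1,\ldots,m_{k-1}$) therefore gives $T\subseteq G'\subseteq G$, provided $|V(G')|\ge(n-1)(k-2)+m_1$, i.e.\ $|N_G(I_k)|\le n-1-m_k$.

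The main obstacle is to ensure $|N_G(I_k)|\le n-1-m_k$ for the chosen $I_k$. The plan is to leverage that the common $G^c$-neighborhood of $I_2\cup\cdots\cup I_k$ in $V\setminus\bigcup_j I_j$ has size at most $m_1-1$ (else we could extend to $K_{m_1,\ldots,m_k}$ in $G^c$), forcing most vertices to have $G$-edges into $\bigcup I_j$. An iterative replacement argument — swapping high-$G$-degree vertices of $I_k$ with low-degree ones from this common non-neighborhood while preserving the multipartite structure with $I_2,\ldots,I_{k-1}$ — together with the slack afforded by $n-1\ge 13\Delta m_k$ should push $|N_G(I_k)|$ below the required threshold.
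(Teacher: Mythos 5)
Your base case ($k=2$) is essentially the paper's argument: embed $T'$ via \Cref{C1}, then attach leaves with \Cref{L4}, checking Hall's condition in ranges — the paper does the same thing (it applies \Cref{C1} with parameters $(m_k, m')$ where $m'=(k-2)(n-1)+m_1$, which collapses to your choice when $k=2$, and then only needs the two regimes $|S|\le m_k$ and $|S|\ge m_k$). The real problem is your inductive step for $k\ge 3$, which takes a different and, as written, incomplete route.

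You split on whether $G^c$ contains $K_{m_2,\ldots,m_k}$. The ``no'' branch is fine. The ``yes'' branch is where the gap lies: you need to find an $m_k$-set $I_k$ with $|N_G(I_k)|\le n-1-m_k$ so that the induction hypothesis applies to $G\setminus(I_k\cup N_G(I_k))$. But the mere existence of a $K_{m_2,\ldots,m_k}$ in $G^c$ gives you no control over $|N_G(I_k)|$, and neither does minimizing over realizable $m_k$-parts. Your proposed fix — an ``iterative replacement argument'' swapping vertices of $I_k$ with vertices of the common $G^c$-neighborhood of $\bigcup_j I_j$ — cannot work in general: that common non-neighborhood has size $\le m_1-1 < m_k$, so there aren't enough vertices to swap into $I_k$, and swapping a vertex in is not guaranteed to decrease $|N_G(I_k)|$ anyway. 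More fundamentally, your case split is not exhaustive with respect to what you can actually conclude: if $G^c$ contains $K_{m_2,\ldots,m_k}$ yet \emph{every} $m_k$-set $S$ satisfies $|N_G(S)|\ge n-m_k$, you have no move left — you cannot recurse, and you have not set up the machinery to embed $T$ directly at this $k$.

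The paper avoids this by splitting on the quantity you actually need, namely whether some set $S$ with $|S|\ge m_k$ has $|N_G(S)|\le n-|S|-1$. If such an $S$ exists, $N_{G^c}(S)$ has $\ge (k-2)(n-1)+m_1$ vertices, its complement avoids $K_{m_1,\ldots,m_{k-1}}$ (any such copy plus an $m_k$-subset of $S$ yields $K_{m_1,\ldots,m_k}$ in $G^c$), and one recurses — note this does not require $S$ to sit inside any multipartite structure, so the condition is genuinely weaker and easier to satisfy than yours. If no such $S$ exists, then $G^c$ is $K_{m_k,m'}$-free for $m'=(k-2)(n-1)+m_1$, and one embeds $T'$ via \Cref{C1} and attaches leaves via \Cref{L4} \emph{at this $k$} — exactly the ``direct embedding'' you only carried out for $k=2$. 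You should promote that direct-embedding argument from a base case to the generic ``otherwise'' branch of the induction (with $k=1$ as the trivial base), and replace your ``$G^c$ contains $K_{m_2,\ldots,m_k}$'' condition by the existence of a low-$G$-degree $m_k$-set; then the two branches cover all possibilities and the ``replacement'' sketch becomes unnecessary.
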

\begin{proof}
Let $n = |T|$. We proceed by induction on $k$. For $k=1$, any graph on $m_1$ vertices trivially contains $K^{1}_{m_1}$ as a subgraph (since $K_m^1$ is the graph with $m$ vertices and no edges.) Now suppose $k \geq 2$ and let $G$ be a graph with $(k-1)(n-1) + m_1$ vertices such that $G^c$ does not contain $K_{m_1, \ldots, m_k}$. 

First suppose there exists $S \subseteq G$ with $|S| \geq m_k$, such that $|N_G(S)| \leq n - |S| - 1$. Then $|N_{G^c}(S)| \geq (k-2)(n-1) + m_1$ and $N_{G^c}(S)$ does not contain a $K_{m_1, \ldots, m_{k-1}}$, or else we could take it together with an $m_k$ vertex subset of $S$ to get a $K_{m_1, \ldots, m_k}$ in $G^c$. Thus we may apply induction to $N_{G^c}(S)$ to conclude that it contains a copy of $T$.

Otherwise, we have that for all $S \subseteq G$ with $|S| \geq m_k$, $|N_G(S)| \geq n - |S|$. For sets $S$ with $|S|=m_k$, this is equivalent to $G^c$ not containing $K_{m_k, m'}$ for  $m' = (k-2)(n-1) + m_1$.  Now let $T'$ be the subtree of $T$ with all leaves removed and fix $x_1 \in G$ to be any vertex. Using   $l \geq 13 \Delta m_k + 1$ we have
\[ (k-1)(n-1) + m_1 \geq n - l + 13 \Delta m_k + (k-2)(n-1) + m_1= n-l+13\Delta m_k +m'.\]
Combining this with $|T'| = n-l$, 
we can apply \Cref{C1} to conclude that $G$ contains a copy of $T'$ rooted at $x_1$. Now let $P$ be the vertices of $T'$ to which we need to connect leaves in order to get $T$, and let $l(v)$ be the number of leaves to attach for each $v \in P$. From the last part of \Cref{C1}, we have that for any $S \subseteq P$ with $|S| \leq m_k$, 
\[ |N_G(S) \backslash T'| \geq \Delta |S| \geq \sum_{v \in S}{l(v)}.\]
Moreover, for any $S \subseteq P$ with $|S| \geq m_k$, we have $|N_G(S)| \geq n - |S|$ which implies
\[ |N_G(S) \backslash T'| \geq |N_G(S)| - |T' \backslash S| = |N_G(S)| + |S| - n + l \geq l = \sum_{v \in P}{l(v)} \geq \sum_{v \in S}{l(v)}.\]
Thus we may apply \Cref{L4} to complete the embedding of $T$.
\end{proof} 

\Cref{Tmanyleaves} now follows immediately.

\begin{proof}[Proof of \Cref{Tmanyleaves}]
Let $n = |T|$, $k = \chi(H)$ and $m_1 \leq \ldots \leq m_k$ be the sizes of the color classes in a $k$-coloring of $H$, so that $m_1 = \sigma(H)$. Let $G$ be a graph on $(n - 1)(k - 1) + m_1$ vertices such that $G^c$ has no copy of $H$. Then $G^c$ has no copy of $K_{m_1, \ldots, m_k}$, and we have that $\ell \geq 13 \Delta |H| + 1 \geq 13 \Delta m_{k} + 1$, so by \Cref{L5} $G$ must contain a copy of $T$.
\end{proof}

\section{Embedding a tree with few leaves} \label{Sfewleaves}

If a bounded degree tree doesn't have many leaves, then it has many long bare paths by \Cref{L1}, so it remains to embed such trees. We will need the following definitions and lemmas of Montgomery \cite{Mo14}. First we define a notion of expansion into a subset of a graph.

\begin{definition}
For a graph $G$ and a set $W \subseteq G$, we say $G$ $d$-expands into $W$ if
\begin{enumerate}
\item $|N(X) \cap W| \geq d|X|$ for all $X \subseteq G$ with $1 \leq |X| < \left \lceil \frac{|W|}{2d} \right \rceil$.
\item $e(X,Y) > 0$ for all disjoint $X,Y \subseteq G$ with $|X| = |Y| = \left \lceil \frac{|W|}{2d} \right \rceil$.
\end{enumerate}
\end{definition}

\begin{definition}
We call $G$ an $(n,d)$-expander if $|G| = n$ and it $d$-expands into $G$.
\end{definition} 

We state some basic properties of expansion.

\begin{lemma} \label{subsetexpand}
Let $W \subseteq Z \subseteq G$ and suppose that $G$ $d$-expands into $W$.
\begin{enumerate}
\item[(i)] $Z$ $d$-expands into $W$.
\item[(ii)] If $d \geq 2$ then $G$ $d$-expands into $Z$.
\item[(iii)] If $d > 1$ and $d/(d-1) \leq c \leq d$ then  $G$ $c$-expands into $W$.
\end{enumerate}
\end{lemma}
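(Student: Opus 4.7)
The plan is to prove all three parts via a single unified contradiction scheme. Write $t_W = \lceil |W|/(2d) \rceil$. Whenever expansion is supposed to fail at some set $X$, I will isolate the block $Y = (\text{target set}) \setminus (X \cup N(X))$ and extract disjoint size-$t_W$ subsets $X_0 \subseteq X$ and $Y_0 \subseteq Y$; by construction $Y_0 \subseteq G \setminus N(X_0)$, so no edge of $G$ goes between them, contradicting condition 2 of the hypothesized $d$-expansion of $G$ into $W$.

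Part (i) is immediate: every $X \subseteq Z$ is also a subset of $G$, and since $W \subseteq Z$ the neighborhoods into $W$ and the edges inside $Z$ are unchanged from those in $G$, so both defining conditions descend to $Z$ verbatim.

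For part (ii), set $t_Z = \lceil |Z|/(2d) \rceil \geq t_W$. Condition 1 in the range $|X| < t_W$ uses only $W \subseteq Z$, and condition 2 at $|X| = t_Z$ follows by restricting the given pair to size-$t_W$ subsets. The only new content is the range $t_W \leq |X| < t_Z$: assuming $|N(X) \cap Z| < d|X|$ gives $|Y| > |Z| - (d+1)|X|$, and the required bound $|Y| \geq t_W$ reduces, using $|X| \leq t_Z - 1$, to the inequality $(d-1)/(2d) \geq 1/(2d)$, i.e., exactly $d \geq 2$. Part (iii) is the same scheme applied inside $W$ in the range $t_W \leq |X| < t'_W$ with $t'_W = \lceil |W|/(2c) \rceil$; the analogous lower bound $|Y| > |W|(c-1)/(2c)$ must dominate $|W|/(2d)$, which rearranges exactly to the boundary hypothesis $c \geq d/(d-1)$, while $c \leq d$ makes the smaller-$|X|$ range and condition 2 automatic.

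The main obstacle, as is typical for expansion arguments, is the interplay with the ceiling functions: the bound $|Y| \geq t_W$ is essentially tight at the stated boundaries ($d = 2$ in (ii), $c = d/(d-1)$ in (iii)), and the strict inequality $|N(X) \cap \cdot| < d|X|$ (resp.\ $< c|X|$) in the failure hypothesis must be leveraged carefully to produce the unit of slack needed to survive rounding to integers.
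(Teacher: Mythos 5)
Your proposal is correct and follows essentially the same route as the paper's proof: in both, the key step is to use condition~2 of the hypothesized $d$-expansion into $W$ to bound the size of $Z \setminus (X \cup N(X))$ (resp.\ $W \setminus (X \cup N(X))$) by $\lceil |W|/(2d) \rceil$, and then count. The paper phrases this as a direct lower bound on $|N(X) \cap Z|$ while you phrase it as a contradiction, but the underlying computation (and the role of the thresholds $d \geq 2$ and $c \geq d/(d-1)$) is identical.
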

\begin{proof}
$(i)$ follows directly from the definition. 

For $(ii)$, condition 2 follows immediately. For condition 1, let $X \subseteq G$ with $1 \leq |X| < \left \lceil \frac{|Z|}{2d} \right \rceil$ be given. If $|X| < \left \lceil \frac{|W|}{2d} \right \rceil$ then we have $|N(X) \cap Z| \geq |N(X) \cap W| \geq d|X|$. Otherwise if $\left \lceil \frac{|W|}{2d} \right \rceil \leq |X| < \left \lceil \frac{|Z|}{2d} \right \rceil$ then we know that $|Z \backslash ( N(X) \cup X) | < \left \lceil \frac{|W|}{2d} \right \rceil$ by condition 2 of $d$-expansion. It follows that 
\[ |N(X) \cap Z| \geq |Z| - |X| - \frac{|W|}{2d} \geq |Z| - \frac{|Z|}{2d} - \frac{|W|}{2d} \geq \frac{|Z|}{2} \geq d|X|. \]

The proof of $(iii)$ is similar to that of $(ii)$. The interesting case to check is when $\left \lceil \frac{|W|}{2d} \right \rceil \leq |X| < \left \lceil \frac{|W|}{2c} \right \rceil$, which implies $| W \backslash (N(X) \cup X) | < \left \lceil \frac{|W|}{2d} \right \rceil$ by condition 2 of $d$-expansion. Notice that $d/(d-1) \leq c$ is equivalent to $c^{-1}+d^{-1}\leq 1$. Combining these gives
\[  |N(X) \cap W| \geq |W| - |X| - \frac{|W|}{2d} \geq |W| - \frac{|W|}{2c} - \frac{|W|}{2d} \geq \frac{|W|}{2} \geq c |X|. \]
\end{proof}

We will also need a useful decomposing property of this expansion.

\begin{lemma}[Lemma 2.3 of Montgomery \cite{Mo14}] \label{annoying}
There exists $n_0$ such that for $k,n \in \N$ with $n \geq n_0$ and $k \leq \log{n}$, if we have $m_1, \ldots, m_k \in \N$ with $m = m_1 + \ldots + m_k$ and $d_i = \frac{m_i}{5m}d \geq 2 \log{n}$, then for any graph $G$ with $n$ vertices which $d$-expands into $W$ with $|W| = m$, we can partition $W$ into $k$ disjoint sets $W_1, \ldots, W_k$ of sizes $m_1, \ldots, m_k$ respectively, so that $G$ $d_i$-expands into $W_i$.
\end{lemma}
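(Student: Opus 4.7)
The plan is a randomized partition of $W$. First I would observe that condition~2 of $d_i$-expansion into $W_i$ is automatic: since $d_i = m_i d/(5m)$ gives $|W_i|/(2d_i) = 5m/(2d) \geq m/(2d)$, any disjoint $X, Y \subseteq G$ with $|X| = |Y| = \lceil 5m/(2d) \rceil$ contain subsets of size $\lceil m/(2d) \rceil$ between which $G$ already has an edge by the hypothesis. So only condition~1 of expansion into each $W_i$ needs to be verified.

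To verify it, I would assign each $w \in W$ independently to a provisional set $W_i'$ with probability $(1-\delta)m_i/m$ for a small slack $\delta = 1/\log n$, leaving $w$ unassigned otherwise. The goal is to show that with positive probability (i) $(1-2\delta)m_i \leq |W_i'| \leq m_i$ for every $i$, and (ii) $|N(X)\cap W_i'| \geq d_i|X|$ for every $i$ and every $X \subseteq G$ with $1 \leq |X| < \lceil 5m/(2d)\rceil$. Granted this, I would distribute the unassigned vertices arbitrarily to obtain $|W_i| = m_i$ for all $i$; since $W_i' \subseteq W_i$, (ii) upgrades to $|N(X)\cap W_i| \geq d_i|X|$, which is condition~1 of $d_i$-expansion into $W_i$.

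Property (i) is a routine Chernoff estimate. For (ii), I would split $s := |X|$ at $\lceil m/(2d)\rceil$. When $s < \lceil m/(2d)\rceil$, condition~1 of $d$-expansion gives $|N(X)\cap W| \geq ds$, so $E|N(X)\cap W_i'| \geq 5(1-\delta)d_i s$, and a multiplicative Chernoff bound produces failure probability $\exp(-\Omega(d_i s))$; since $d_i \geq 2\log n$ this easily dominates the $\binom{n}{s} \leq \exp(s\log n)$ union-bound cost. When $\lceil m/(2d)\rceil \leq s < \lceil 5m/(2d)\rceil$, condition~2 of $d$-expansion forces $|W\setminus N(X)| \leq s + \lceil m/(2d)\rceil \leq 3m/d$. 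Here I would instead bound the complement $|W_i'\setminus N(X)|$: it is stochastically dominated by $\mathrm{Bin}(3m/d,\,(1-\delta)m_i/m)$, whose mean is at most $3m_i/d$, while the target is $|W_i'| - d_i s \geq m_i/3$. A standard binomial tail bound then yields $P[|W_i'\setminus N(X)| > m_i/3] \leq (9e/d)^{m_i/3}$, and since $d_i \leq d/5$ together with $d_i \geq 2\log n$ force $d \geq 10\log n$, this is $\exp(-\Omega(m_i \log d))$. Using $\log d \geq \log\log n$ and $m_i \geq 10m\log n/d$, this saving dominates the union-bound cost $\exp(s\log n) \leq \exp(5m\log n/(2d))$. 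A final factor of $k \leq \log n$ over the choices of $i$ keeps the total failure probability below $1$.

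The main obstacle is precisely this upper range $s \approx 5m/(2d)$: the union-bound cost and the naive Chernoff saving on $|N(X)\cap W_i'|$ are both of the same exponential order $\Theta(m\log n/d)$, and only by bounding the small complement $W_i'\setminus N(X)$ rather than the large set $N(X)\cap W_i'$ does one extract the extra $\log\log n$ factor in the exponent that makes the threshold $d_i \geq 2\log n$ sharp. Once the probabilistic estimates are in place the final size adjustment is immediate thanks to the slack $\delta$.
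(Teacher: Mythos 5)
The paper does not reprove this lemma---it cites Lemma~2.3 of Montgomery \cite{Mo14} directly---so your attempt is judged on its own merits. The overall plan is the right one: randomly partition $W$ with a small slack $\delta$; observe that condition~2 of $d_i$-expansion into $W_i$ is inherited from condition~2 of $d$-expansion into $W$ because $\lceil |W_i|/(2d_i)\rceil=\lceil 5m/(2d)\rceil\ge\lceil m/(2d)\rceil$; and for condition~1, split on $s=|X|$ at $\lceil m/(2d)\rceil$, and in the upper range control the small complement $W_i'\setminus N(X)$ rather than $N(X)\cap W_i'$. That last switch is precisely what lets $d_i\ge 2\log n$ beat the union bound when $s$ is near $5m/(2d)$, and you identify this correctly.

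The gap is in property~(i), which you dismiss as ``a routine Chernoff estimate'' with $\delta=1/\log n$. The hypotheses only force $m_i\ge 10m\log n/d$, and $m$ can be as small as order $d$ (for $m>2d$ condition~1 at $|X|=1$ gives $m\ge d$, and for $m\le 2d$ condition~1 is vacuous and says nothing at all). So $m_i$ can be as small as $\Theta(\log n)$, and then $\delta^2 m_i=\Theta(1/\log n)\to 0$: the Chernoff bound you invoke on $|W_i'|$ is vacuous, and you cannot conclude $(1-2\delta)m_i\le|W_i'|\le m_i$ with positive probability, let alone simultaneously for all $i\le k$. The repair is to take $\delta$ a small fixed constant, say $\delta=10^{-2}$; then $\delta^2 m_i=\Omega(\log n)\gg\log k$ and (i) holds easily. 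You must then re-check (ii) with the larger $\delta$, but it survives: in the lower range the mean is still $5(1-\delta)d_is>4d_is$, a constant factor above the target, so the Chernoff exponent is a constant times $d_is\ge 2s\log n$ and still beats $\binom{n}{s}$; in the upper range $(1-2\delta)m_i-d_is\ge(1/2-2\delta)m_i\ge m_i/3$ holds for all $\delta\le 1/12$. Finally, the extreme regime $m\le 2d$ should be dispatched separately: there $\lceil m/(2d)\rceil=1$ forces $G$ to be a complete graph (and $m_i$ may be bounded), so the desired expansion into any $W_i$ is verified directly rather than probabilistically. With those two adjustments the argument closes.
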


The following lemma will be crucial for \cref{k3}. It allows us to simultaneously find many paths of prescribed lengths between endpoints in an expander graph.

\begin{lemma}[Lemma 3.2 of Montgomery \cite{Mo14}] \label{L2}
Let $G$ be a graph with $n$ vertices, where $n$ is sufficiently large, and let $d = 160 \log{n} / \log{\log{n}}$. Suppose $r, k_1, \ldots, k_r$ are integers with $4\lceil \log{n} / \log{\log{n}}\rceil \leq k_i \leq n/40$, for each $i$, and $\sum_{i}{k_i} \leq 3|W| / 4$. Suppose $G$ contains disjoint vertex pairs $(x_i, y_i), 1 \leq i \leq r$, and let $W \subset G$ be disjoint from these vertex pairs.

If $G$ $d$-expands into $W$, then we can find disjoint paths $P_i$, $1 \leq i \leq r$,with interior vertices in $W$, so that each path $P_i$ is an $x_i,y_i$-path with length $k_i$.
\end{lemma}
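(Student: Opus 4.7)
The plan is to find the paths $P_1,\dots,P_r$ greedily, one at a time. At stage $i$, having already placed $P_1,\dots,P_{i-1}$, set $W_i := W \setminus \bigcup_{j<i} V(P_j)$ and look for an $x_i$-to-$y_i$ path of length exactly $k_i$ whose interior lies in $W_i$. Since $\sum_j k_j \leq 3|W|/4$, we always have $|W_i|\geq |W|/4$, so at every step a large fraction of $W$ is still available.

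For the single-pair subroutine with target length $k = k_i$, I would perform a simultaneous breadth-first search from the two endpoints inside $W_i$. Set $A_0=\{x_i\}$ and $A_{j+1}=(N(A_j)\cap W_i)\setminus(A_0\cup\cdots\cup A_j)$, and define $B_j$ symmetrically from $y_i$. Choose $\alpha=\lceil (k-1)/2\rceil$ and $\beta=\lfloor (k-1)/2\rfloor$, so that $\alpha+\beta+1=k$. The plan is to grow the two BFS trees until $|A_\alpha|,|B_\beta|\geq \lceil |W|/(2d)\rceil$, then apply condition~2 of $d$-expansion to find an edge between $A_\alpha$ and $B_\beta$, which closes up through the two BFS trees into a path of length exactly $k$.

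The first condition of $d$-expansion into $W$ gives $|N(S)\cap W|\geq d|S|$ for every $S$ with $|S|<\lceil |W|/(2d)\rceil$. Because $d=160\log n/\log\log n$ is enormous compared with $1$, each BFS layer multiplies in size by roughly $d$ as long as it is below the threshold, so only about $\log n/\log\log n$ layers are needed to reach size $\lceil |W|/(2d)\rceil$. Our hypothesis $k \geq 4\lceil \log n/\log\log n\rceil$ then guarantees $\alpha,\beta$ are large enough for both BFS processes to cross this threshold before we run out of layers. Once the threshold is reached on both sides, condition~2 of $d$-expansion supplies the required crossing edge between $A_\alpha$ and $B_\beta$, and this edge (together with the BFS ancestors back to $x_i$ and $y_i$) yields the desired path.

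The main obstacle is that the hypothesis gives $d$-expansion into the \emph{full} set $W$, while the BFS is restricted to the shrunken set $W_i$: any neighbors of a layer $A_j$ that happen to lie in $W\setminus W_i$ are wasted. Because $|W\setminus W_i|$ may be as large as $3|W|/4$, these stolen neighbors can destroy the expansion of the first few very small layers, where $d|S|$ itself is modest. The cure is precisely the huge slack in $d$: after inflating once by a factor of $d$, a layer is already larger than any reasonable loss budget, and the geometric growth on subsequent layers dwarfs the cumulative waste. Making this quantitatively precise — perhaps by treating the first $O(1)$ layers separately, or by invoking the $c$-expansion inheritance of \Cref{subsetexpand}(iii) to pass from expansion into $W$ to a slightly weaker expansion into $W_i$ — is the only delicate part of the argument; the rest is the standard "grow from both sides and meet in the middle" template.
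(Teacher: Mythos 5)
This lemma is cited from Montgomery's paper and not proved here, so there is no in-paper argument to compare against; I will assess your sketch on its own terms.

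The central gap is in the greedy removal step, and it is not a detail that ``huge slack in $d$'' can repair. You propose to find $P_1,\dots,P_r$ one at a time, restricting the BFS for $P_i$ to $W_i = W\setminus\bigcup_{j<i} V(P_j)$. The hypothesis only gives $d$-expansion into the \emph{full} set $W$, and after the earlier paths are placed you may have deleted up to $3|W|/4$ vertices of $W$. For a singleton $S=\{x_i\}$, condition 1 gives $|N(x_i)\cap W|\geq d$, but $d = 160\log n/\log\log n$ is polylogarithmic, while $|W\setminus W_i|$ can be a constant fraction of $|W|$, hence polynomial in $n$. Nothing prevents \emph{all} $d$ of those guaranteed neighbours from lying in $W\setminus W_i$, in which case $A_1=\emptyset$ and the BFS never starts. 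The same failure can occur at every small layer: the lower bound $d|S|$ stays polylogarithmic until $|S|$ itself is within a $d$-factor of $|W|$, so the ``loss budget'' of size up to $3|W|/4$ dwarfs $d|S|$ for many iterations, not just the first one. Your appeal to \Cref{subsetexpand}(iii) does not help either: that clause only lets you trade the expansion constant $d$ for a smaller $c$ while keeping the same target set $W$; none of the three clauses of \Cref{subsetexpand} let you pass from expansion into $W$ to expansion into a subset of $W$ after adversarial deletion of a constant fraction of it. In short, a greedy one-path-at-a-time strategy that simply excises previously used vertices cannot be made to work with the stated hypotheses, and any correct proof (including Montgomery's) must account for the interaction between paths globally rather than sequentially.

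Two smaller issues, less fatal but still unaddressed: first, the meet-in-the-middle BFS must keep the two trees vertex-disjoint, otherwise the crossing edge between $A_\alpha$ and $B_\beta$ need not give a simple path of length exactly $k_i$; second, you cannot in general arrange to ``hit the threshold exactly at levels $\alpha$ and $\beta$'' --- the BFS may cross $\lceil|W|/(2d)\rceil$ at an earlier level, and one then has to argue (via condition 2) that layers above the threshold remain large so the path length can be padded out to precisely $k_i$. These are standard but they need to be stated; as written, the exact-length requirement is asserted rather than derived.
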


It will be convenient for us to restate the previous lemma using the definition of a linked system.

\begin{corollary}\label{LemmaExpanderLinked}
Let $n, s \in\N$ and $c = 160\log n/\log\log n$. Suppose that $G$ is a graph on $n$ vertices and $W \subseteq G$ such that $n \geq |W| + 2s$ and $G$ $c$-expands  into $W$. Then $(G \backslash W, W)$ is $(s,d^-, d^+)$-linked system, for $d^-= 4\left\lceil \frac{\log n}{\log\log n}\right\rceil$ and $d^+ = \frac{|W|}{40s}$.
\end{corollary}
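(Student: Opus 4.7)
The plan is to show that this corollary is essentially a repackaging of Lemma~\ref{L2}, so the proof amounts to unwinding the definition of an $(s,d^-,d^+)$-linked system and checking that its setup matches the hypotheses of Lemma~\ref{L2}.

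So I would start by fixing distinct vertices $x_1, y_1, \ldots, x_s, y_s \in G \setminus W$ and integers $d_1, \ldots, d_s$ with $d^- \leq d_i \leq d^+$; note that having $2s$ such distinct vertices in $G \setminus W$ is guaranteed by the assumption $n \geq |W| + 2s$. The goal is to produce disjoint paths $P_i$ of length $d_i$ from $x_i$ to $y_i$, internally in $W$. I will do this by a single invocation of Lemma~\ref{L2} with $r = s$ and $k_i = d_i$, the same graph $G$, the same $W$, and the same pairs $(x_i, y_i)$.

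The remaining task is to verify the three numerical conditions of Lemma~\ref{L2}. The lower bound $k_i \geq 4\lceil \log n / \log\log n\rceil$ is exactly the choice of $d^-$. For the upper bound $k_i \leq n/40$, I use that $d^+ = |W|/(40s) \leq n/40$ since $|W| \leq n$ and $s \geq 1$. For the total length bound, I have
\[
\sum_{i=1}^{s} k_i \;\leq\; s \cdot d^+ \;=\; \frac{|W|}{40} \;\leq\; \frac{3|W|}{4}.
\]
The pairs $(x_i, y_i)$ are disjoint from each other since the $2s$ endpoints were chosen distinct, and disjoint from $W$ since they lie in $G \setminus W$. Finally, the expansion hypothesis of Lemma~\ref{L2} is precisely the assumption that $G$ $c$-expands into $W$, as $c = 160\log n/\log\log n$ matches the $d$ appearing there.

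There is no real obstacle here; the only thing to be careful about is the implicit ``$n$ sufficiently large'' requirement inherited from Lemma~\ref{L2}, which should be absorbed into the hypotheses of the corollary in the same way it is in the cited statement. Once the hypotheses are verified, Lemma~\ref{L2} delivers the required disjoint paths directly, establishing that $(G\setminus W, W)$ is $(s, d^-, d^+)$-linked.
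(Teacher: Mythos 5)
Your proof is correct and is essentially the same as the paper's: the paper dispatches this corollary with a single sentence invoking Lemma~\ref{L2}, and your argument just spells out the routine verification of its hypotheses (including the point that $n\geq |W|+2s$ guarantees enough vertices in $G\setminus W$, and that the "$n$ sufficiently large" condition is inherited), which is exactly what is meant there.
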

\begin{proof}
This follows immediately from \Cref{L2} and the definition of $(s, d^-, d^+)$-linked system.
\end{proof}

Lemma~\ref{L2} shows that if a graph $G$ expands into a set $W$, then it is possible to cover $3/4$ of $W$ by disjoint paths of prescribed length. The following theorem shows that, under similar assumptions to Lemma~\ref{L2}, it is possible to cover all of $W$ by such paths.
%\Cref{L2} is also crucial for Montgomery, who uses it together with absorbers to deduce the following.

\begin{theorem}[Theorem 4.3 of Montgomery \cite{Mo14}] \label{T1}
Let $n$ be sufficiently large and let $l \in \N$ satisfy $l \geq 10^3 \log^2{n}$ and $l | n$. Let a graph $G$ contain $n/l$ disjoint vertex pairs $(x_i, y_i)$ and let $W = G \backslash (\cup_i\{x_i, y_i\})$. Suppose $G$ $d$-expands into $W$, where $d = 10^{10} \log^4{n} / \log{\log{n}}$. Then we can cover $G$ with $n/l$ disjoint paths $P_i$ of length $l-1$, so that $P_i$ is an $x_i,y_i$-path.
\end{theorem}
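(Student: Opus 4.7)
The plan is to use \emph{absorption}. I would first build a small flexible absorbing structure inside $W$ that can incorporate any small leftover set of $W$-vertices into the eventual paths, then apply \Cref{L2} to handle the bulk of the $(x_i, y_i)$-paths, and finally trigger the absorber to mop up the vertices of $W$ that were not yet used, so that the final paths cover all of $G$ and still have length exactly $l-1$.

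For the absorber, I would reserve a small subset $A \subseteq W$ of size roughly $|W|/\log^2 n$. Attached to the eventual $(x_i,y_i)$-paths I would distribute many short \emph{absorbing gadgets} associated with the vertices of $A$. Each gadget should be length-preserving: it admits two internally distinct configurations of the same length, one of which uses the associated vertex $a \in A$ and one of which does not. Since the $(x_i,y_i)$-paths are long ($l \geq 10^3\log^2 n$), there is plenty of room to insert many gadgets along each path. The combinatorial task is to ensure that \emph{any} subset $A' \subseteq A$ of size at most $|A|/2$ can be realized as the union of active gadgets; this is the standard Montgomery template-absorption trick, where the possible configurations are encoded by matchings in a bipartite template graph with strong matching properties, and each edge of the template is realized by an internally disjoint short path in $W$ using \Cref{L2}. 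Once the absorber has been laid down, I would apply \Cref{L2} again (parts (ii) and (iii) of \Cref{subsetexpand} guarantee that enough expansion into $W\setminus A$ remains after removing the absorber) with prescribed lengths summing to $l-1$ minus each path's fixed gadget contribution, producing $(x_i,y_i)$-paths that already cover all of $W \setminus A$. The leftover subset of $A$ has size at most $|A|/2$, so toggling the gadgets absorbs exactly this leftover and yields paths of length precisely $l-1$ covering all of $G$.

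The principal obstacle is the construction of the absorber: the gadgets must be length-preserving, so that toggling them changes which vertices are covered but not the length of any $P_i$, and the template must be flexible enough to realize every subset of $A$ of size at most $|A|/2$, not just one. Getting the lengths to work out exactly is what forces the hypotheses $l \geq 10^3\log^2 n$ (room for many gadgets along each path) and $l \mid n$ (so the total vertex count of the union of paths equals $|G|$). The very strong expansion hypothesis $d = 10^{10}\log^4 n/\log\log n$ is used because building the absorber already requires many iterated applications of \Cref{L2}, each of which consumes a polylogarithmic factor of the effective expansion, and a further application is still needed afterwards to connect the bulk of the paths.
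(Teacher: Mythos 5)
The paper does not prove this statement; it is quoted verbatim as Theorem~4.3 of Montgomery~\cite{Mo14} and invoked as a black box. Your sketch nonetheless captures the architecture of Montgomery's actual argument: a reserved subset of $W$ carries a distributive (template-based) absorber built from length-preserving vertex gadgets, the bulk of $W$ is then covered by iterated applications of the linking lemma (\Cref{L2}, using \Cref{subsetexpand} to retain expansion after removing the absorber), and the residual vertices are mopped up by activating gadgets; the features you flag as the principal obstacles --- constructing a bipartite template with a robust matching property so that \emph{every} not-too-large leftover set can be absorbed, and bookkeeping the gadget contributions so every $P_i$ has length exactly $l-1$ --- are precisely the technical content of Montgomery's proof, which is why the lemma is cited rather than re-proved here.
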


Montgomery uses the above theorem to embed a spanning tree with many long bare paths in an expander. The idea is to first find a copy of the tree with the bare paths removed, and then apply \Cref{T1} to find the paths. We will use this theorem for the same purpose in \cref{k2}.

\subsection{The case $k=2$} \label{k2}

If we have a graph with at least $n$ vertices for which small sets expand and whose complement does not contain $K^2_m$, then we can find an embedding of the tree via \Cref{T1}, as in Montgomery \cite{Mo14}.

\begin{lemma} \label{L6}
Let $n, m, \Delta \in \N$ with $n$ sufficiently large relative to $\Delta$ and let $d = 4 \cdot  10^{12} \frac{\log^4{n}}{\log{\log{n}}}$, $r = \lceil 10^3 \log^2{n} \rceil$, such that $n \geq 2(d+1)m$. Let $T$ be a tree with $n$ vertices, $\Delta(T) \leq \Delta$, and at least $n/(4r)$ disjoint bare paths of length $r$. If $G$ is a graph with $n'$ vertices such that $n' \geq n$, $G^c$ does not contain $K^2_m$, and for all $S \subset G$ with $|S| \leq m$, $|N(S)| \geq d|S|$, then $G$ contains a copy of $T$.
\end{lemma}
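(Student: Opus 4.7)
The plan is a two-stage embedding: first embed the skeleton $T_1$ obtained from $T$ by deleting the interior vertices of $p \geq n/(4r)$ vertex-disjoint bare paths of length $r$ via Haxell-type Lemma~\ref{L3}, and then fill in those bare paths between their prescribed endpoint pairs $(u_i,v_i)$ using Montgomery's Theorem~\ref{T1}. Here $|T_1| = n - p(r-1)$ and the $2p$ endpoints are distinct vertices of $T_1$.

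First I would verify that $G$ is an $(n', d)$-expander. The small-set condition is the hypothesis, and for $m \leq |S| < \lceil n'/(2d) \rceil$ the $K^2_m$-freeness of $G^c$ gives $|N_G(S)| \geq n' - |S| - m + 1 \geq d|S|$, using $n \geq 2(d+1)m$; the edge condition follows from the same source. Then apply Lemma~\ref{annoying} with $k=2$ to partition $V(G) = W_1 \cup W_2$, where $|W_1| := p(r-1) - (11\Delta+1)m$ is slightly smaller than the reservoir size $p(r-1)$ ultimately needed for the bare-path interiors. The resulting expansion constants $d_i = (|W_i|/5n')\,d$ remain comfortably above every threshold needed later (for instance $d_1 \gg 10^{10}\log^4 n/\log\log n$) in the regime where $n'$ is within a constant factor of $n$, which is the regime of the downstream application ($n' \leq 2n$); larger $n'$ can be handled by a preliminary reduction to a nice subgraph of size $n$.

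Next, embed $T_1$ in $W_2$ via Lemma~\ref{L3} with a single-vertex root set $X = \{x_1\}$. The small-set requirement $|N_{G[W_2]}(S) \setminus X| \geq 4\Delta|S|$ for $|S| \leq m$ is immediate from the $d_2$-expansion of $G$ into $W_2$, since $d_2 \gg 4\Delta$. For the medium-set requirement $|N_{G[W_2]}(S)| \geq |T_1| + 10\Delta m$ when $m \leq |S| \leq 2m$, pure expansion is too weak, so I would instead use the $K^2_m$-free hypothesis once more, obtaining
\[
|N_{G[W_2]}(S)| \;\geq\; |N_G(S)| - |W_1| \;\geq\; (n' - 2m + 1) - (p(r-1) - (11\Delta+1)m) \;\geq\; |T_1| + 10\Delta m,
\]
where the slack of $(11\Delta+1)m$ engineered into $|W_1|$ is exactly what makes this hold. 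Lemma~\ref{L3} then embeds $T_1$ into $W_2$ with the $u_i, v_i$ placed at distinct designated vertices.

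Finally, $|W_2| - |T_1| = (11\Delta+1)m + (n'-n)$ vertices of $W_2$ are unused; adjoin any $(11\Delta+1)m$ of them to $W_1$ to obtain $W^*$ with $|W^*| = p(r-1)$ exactly. By Lemma~\ref{subsetexpand}(ii), $G$ still $d_1$-expands into $W^*$, and so does the induced subgraph $G''$ on $W^* \cup \{u_1,v_1,\ldots,u_p,v_p\}$, which has $p(r+1)$ vertices. Applying Theorem~\ref{T1} to $G''$ with $l = r + 1$ (so $l \mid p(r+1)$, $l \geq 10^3\log^2(p(r+1)) + 1$, and $d_1$ exceeds the required $10^{10}\log^4(p(r+1))/\log\log(p(r+1))$) produces $p$ vertex-disjoint $u_i$-$v_i$-paths of length $r$ with interiors covering $W^*$; combined with the embedded $T_1$, they form a copy of $T$ in $G$. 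The main obstacle is juggling three simultaneous constraints on the partition — the reservoir eventually needing exactly $p(r-1)$ vertices, $W_2$ needing enough slack beyond $|T_1|$ to meet Lemma~\ref{L3}'s medium-set condition, and both parts retaining expansion well above Theorem~\ref{T1}'s threshold after Lemma~\ref{annoying}'s factor-$1/5$ loss — but the $(11\Delta+1)m$-vertex shift described above is the only trick needed, with the generous constant $4 \cdot 10^{12}$ in $d$ absorbing every expansion loss.
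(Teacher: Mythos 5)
Your strategy mirrors the paper's proof almost exactly: verify $G$ is an $(n',d)$-expander, use Montgomery's partitioning lemma (\Cref{annoying}) to carve out a reservoir, embed the skeleton of $T$ with a Haxell-type lemma, and connect the prescribed endpoint pairs with Theorem~\ref{T1}. The arithmetic you do to verify the medium-set condition for \Cref{L3} via the $K^2_m$-freeness of $G^c$ is sound, the slack bookkeeping with $(11\Delta+1)m$ works, and the expansion constants survive the factor-$1/5$ loss comfortably, so the core of the argument is correct. (The paper routes the skeleton embedding through \Cref{C1} rather than \Cref{L3} directly, and uses partition sizes $n'-n/8$ and $n/8$ rather than tying the reservoir size to $p(r-1)$, but these are cosmetic differences.)

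The one genuine gap is your dismissal of large $n'$: ``larger $n'$ can be handled by a preliminary reduction to a nice subgraph of size $n$.'' As stated this does not work --- passing to an induced subgraph of exactly $n$ vertices forces a spanning-tree embedding and gives you no extra room for the medium-set expansion condition of \Cref{L3}, nor any control over the pointwise expansion hypothesis $|N(S)|\geq d|S|$, which is not inherited by induced subgraphs. The correct and much simpler disposal of large $n'$ is what the paper does first: if $n' \geq n + 13\Delta m + m$, then \Cref{C1} applies immediately (its only hypothesis beyond $\Delta(T)\leq\Delta$ is that $G^c$ has no $K_{m,m}$ and $|G|\geq |T| + 13\Delta m + m$), and you never need the bare paths at all. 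That case split is what licenses the assumption $n' = n(1+o(1))$ that makes all your expansion constants large enough, so it cannot be waved away.
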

\begin{proof}
If $n' \geq n + 13 \Delta m + m$ then $G$ contains a copy of $T$ by \Cref{C1}. Otherwise we have $n \leq n' < n + 13 \Delta m + m = n(1 + o(1))$. 

We first note that $G$ is an $(n', d)$-expander. Indeed, for any $S \subseteq G$ with $1\leq |S|\leq m$ we have $|N(S)| \geq d|S|$ by assumption. For  $S \subseteq G$ with $m \leq |S| < \lceil n'/(2d) \rceil$, using $n'\geq 2(d+1)m$  and the $K_m^2$-freeness of $G^c$  we have
\[ |N(S)| \geq n' - |S| - m \geq d|S|, \]
so the first condition holds. Moreover, since $G^c$ does not have $K^2_m$ and $\lceil n'/(2d) \rceil \geq m$, the second condition holds as well.

Now let $T'$ be $T$ with the interior vertices of the $n/4r$ bare paths of length $r$ deleted. Then $|T'| = 3n/4 + n/(4r)$. Let $n_1 = n'-n/8$ and $n_2 = n / 8$. Then if we let $d_i = \frac{n_i}{5n'} d$, we can apply \Cref{annoying} to partition $G$ into $G_1$ and $G_2$ such that $|G_i| = n_i$ and $G$ $d_i$-expands into $G_i$. Note that $m = o(n)$ and hence
\[ n_1 = n'-n/8 \geq \frac{7n}{8} \geq \frac{3n}{4} + \frac{n}{4r} + 13 \Delta m + m = |T'| + 13 \Delta m + m.\]
Morever, $G_1^c$ has no $K^2_m$ so we conclude by \Cref{C1} that $G_1$ contains a copy of $T'$. Let $(x_i, y_i)$ be the disjoint vertex pairs in the copy of $T'$ that need to be connected by paths to get $T$.

Let $G'$ be any subgraph of $G$ of size $(r+1)n/4r$ containing $G_2 \cup (\bigcup_{i}{\{x_i, y_i\}})$, and let $W=G'\setminus (\bigcup_{i}{\{x_i, y_i\}})$.
Since $G_2\subseteq W$,  we may apply \Cref{subsetexpand} $(i), (ii)$ to conclude that $G'$ $d_2$-expands into $W$. 
We have 
$$d_2 = dn/40n' \geq d/41\geq 10^{10} \log^4{n} / \log{\log{n}} \geq 10^{10} \log^4{|G'|} / \log{\log{|G'|}}.$$
By \Cref{subsetexpand} (iii), $G'$ $10^{10} \log^4{|G'|} / \log{\log{|G'|}}$-expands into $W$.
Since $|G'|\leq n$, we have and $r+1 \geq 10^3 \log^2{|G'|}$.
Combining these, we can apply \Cref{T1} with $l = r+1$, $G=G'$, and $d=10^{10} \log^4{|G'|} / \log{\log{|G'|}}$ to conclude that the pairs $(x_i, y_i)$ can be connected by disjoint paths of length $r$ in $G'$, completing the embedding of $T$.
\end{proof}

Putting \Cref{L5} and \Cref{L6} together, we may conclude the case $k=2$ for all bounded degree trees as follows.

\begin{proof}[Proof of \Cref{TmainKmmmm} for $k=2$]
Let $d = 4 \cdot 10^{12} \frac{\log^4{n}}{\log{\log{n}}}$ and $r = \lceil 10^3 \log^2{n} \rceil$. We can choose $C_{\Delta,k}$ such that $n$ is sufficiently large relative to $\Delta, k$ and $n \geq (2d+3)m_2$. Now let $G$ be a graph with $n+m_1-1$ vertices such that $G^c$ does not contain $K_{m_1, m_2}$. Notice that in particular, $G^c$ doesn't contain $K_{m_2}^2$.
If $T$ has at least $n/4r \geq 13 \Delta m_2 + 1$ leaves, then by \Cref{L5} we are done. Otherwise, by \Cref{L1} $T$ has at least $n/4r$ disjoint bare paths of length $r$. Note that since $G^c$ has no $K_{m_1, m_2}$, we have that for any $S \subseteq G$ with $|S| \geq m_1$, $|N(S)| \geq n - m_2-|S|$. Now choose $X \subseteq G$ with $|X| \leq m_1-1$ maximal so that $|N(X)| < d|X|$ and let $G' = G \backslash X$. Then we claim that for all $S \subseteq G'$ with $|S| \leq m_1$, $|N_{G'}(S)| \geq d|S|$. Indeed, otherwise we would have $|N(X \cup S)| < d|X \cup S|$, so by maximality of $X$ this would imply $m_1 \leq |X \cup S| \leq 2m_1$. But then
\[ 2dm_1 \geq d | X \cup S | > |N(X \cup S)| \geq n - m_2-|X \cup S| \geq n - 2m_1-m_2,\]
a contradiction. 
For $S$ with $m_1\leq |S|\leq m_2$ we have $|N(S)|\geq n-2m_2\geq dm_2\geq d|S|$. Since $|X|\leq m_1-1$, we have $|G'|\geq n$.
Thus we may apply \Cref{L6} to $G'$ with $m=m_2$ to conclude that $G'$ contains a copy of $T$.
\end{proof}

\subsection{The case $k \geq 3$} \label{k3}

We first extend \Cref{C1} to show that we can embed a large bounded degree tree into a graph whose complement does not contain $K^k_m$.
 
\begin{lemma} \label{C2}
Let $\Delta, k, m\in \N$ be given,   $T$ a tree with $\Delta(T) \leq \Delta$, and $G$ a graph with  $|G| \geq (k-1)(|T| + 13 \Delta m) + m$  such that $G^c$ does not contain $K^k_m$. Then $G$ contains a copy of $T$.
\end{lemma}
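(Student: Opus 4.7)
My plan is to prove \Cref{C2} by induction on $k$, mirroring the inductive structure used in the proof of \Cref{L5}. The base case $k = 2$ is immediate: the hypothesis that $G^c$ contains no $K^2_m = K_{m,m}$ together with $|G| \geq |T| + 13\Delta m + m$ lets me invoke \Cref{C1} with $m_1 = m_2 = m$ to embed $T$ directly. (The case $k = 1$ is vacuous, since any graph on at least $m$ vertices already contains $K^1_m$ as a subgraph.)

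For the inductive step, fix $k \geq 3$ and assume the statement at level $k-1$. I split into two cases depending on whether some $m$-subset of $G$ has a small $G$-neighborhood. In the first case, suppose there exists $S \subseteq V(G)$ with $|S| = m$ and $|N_G(S)| \leq |T| + 13\Delta m - m$, and set $T_S := V(G) \setminus (S \cup N_G(S))$. By construction every edge between $S$ and $T_S$ lies in $G^c$, so if $G^c[T_S]$ contained $K^{k-1}_m$, adjoining $S$ as a $k$-th part would yield a forbidden $K^k_m$ in $G^c$; hence $G^c[T_S]$ is $K^{k-1}_m$-free. A short computation gives $|T_S| \geq |G| - |T| - 13\Delta m \geq (k-2)(|T| + 13\Delta m) + m$, so the induction hypothesis applied to $G[T_S]$ produces the desired copy of $T$.

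In the complementary second case, every $m$-subset $S$ satisfies $|N_G(S)| \geq |T| + 13\Delta m - m + 1$. Writing $m_2 := |G| - |T| - 13\Delta m$, this says that the common $G$-non-neighborhood of any $m$-subset has fewer than $m_2$ vertices, i.e.\ $G^c$ is $K_{m, m_2}$-free. Since $k \geq 3$ and $|G| \geq (k-1)(|T| + 13\Delta m) + m$, we have $m_2 \geq (k-2)(|T| + 13\Delta m) + m \geq m \geq 1$, and by definition $|G| = |T| + 13\Delta m + m_2$, which is exactly the hypothesis of \Cref{C1} with $m_1 = m$; applying it yields $T \subseteq G$.

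The only real work is the parameter bookkeeping in each case --- verifying $|T_S|$ is large enough to invoke induction at level $k-1$, and that $m_2 \geq 1$ so that \Cref{C1} applies in the second case --- but since both follow directly from the hypothesis on $|G|$, no substantive obstacle arises beyond routine arithmetic.
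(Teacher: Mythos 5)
Your proof is correct and takes essentially the same approach as the paper: induction on $k$, with a dichotomy between the case where $G^c$ contains a large complete bipartite graph (pass to the large side and induct) and the case where it doesn't (apply \Cref{C1} directly). The only cosmetic differences are that the paper uses $k=1$ as the (vacuous) base case with $m' = (k-2)(|T| + 13\Delta m) + m$ as the second part size, while you start at $k=2$ and use the slightly larger $m_2 = |G| - |T| - 13\Delta m$; both parameter choices work and lead to the same argument.
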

\begin{proof}
We proceed by induction on $k$. For $k=1$, any graph on $m$ vertices trivially contains $K^1_m$. Now suppose $k \geq 2$ and let $m' = (k-2)(|T| + 13 \Delta m) + m$. If $G^c$ does not contain $K_{m, m'}$ then by \Cref{C1}, $G$ contains a copy of $T$.

Otherwise $G$ contains disjoint $A,B$ with $|A| = m, |B| = m'$ and no edges between $A$ and $B$. Then $B^c$ does not contain a copy of $K^{k-1}_m$ or else taking this copy together with $A$ would give a copy of $K^k_m$ in $G^c$. But then by induction, $B$ contains a copy of $T$.
\end{proof}

Moreover, for $k \geq 3$, we observe that we can embed much larger bounded degree forests than trees. This makes sense in view of the Burr's construction showing (\ref{RamseyLowerBound}) -- it does not have a tree on $n$ vertices, but it has a forest made of $k-1$ trees each of size $n-1$. 

\begin{corollary} \label{C3}
Let $k, m, \Delta \in \N$ be given with $k \geq 3$,  and let $T_a, T_b$ be trees with $|T_a|\leq |T_b|$ and  $\Delta(T_a), \Delta(T_b) \leq \Delta$. Let $G$ be a graph with $G^c$ not containing $K^k_m$. If 
\[ |G| \geq |T_a| + (k-1)(|T_b| + 13 \Delta m) + m,\]
then $G$ contains a copy of the forest $T_a \cup T_b$.
\end{corollary}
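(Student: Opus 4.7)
The plan is to mirror the proof of \Cref{C2}, splitting into cases based on whether $G^c$ contains a large complete bipartite subgraph. I set $m' := (k-2)(|T_b| + 13 \Delta m) + m$; a direct expansion shows that the hypothesis on $|G|$ rearranges to
\[
|G| \geq |T_a| + |T_b| + 13 \Delta m + m',
\]
which is precisely the size condition needed to apply \Cref{C1} to the forest $T_a \cup T_b$ (a disjoint union of two trees) with $m_1 = m$ and $m_2 = m'$. So in the first case, when $G^c$ does not contain $K_{m, m'}$, \Cref{C1} directly produces the desired copy of $T_a \cup T_b$ inside $G$.

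In the remaining case $G^c$ does contain $K_{m, m'}$, so I pick disjoint sets $A, B \subseteq G$ with $|A| = m$, $|B| = m'$, and no edges between them in $G$. The crucial observation, exactly as in the proof of \Cref{C2}, is that $B^c$ cannot contain $K_m^{k-1}$, since otherwise such a copy together with $A$ would yield a $K_m^k$ in $G^c$. Because $|B| = m' = ((k-1) - 1)(|T_b| + 13 \Delta m) + m$, an application of \Cref{C2} to $B$ with parameter $k - 1$ and tree $T_b$ embeds $T_b$ inside $B$. It then remains to find $T_a$ disjoint from this copy of $T_b$: the complement of $G \setminus T_b$ still contains no $K_m^k$, and using $|T_a| \leq |T_b|$ and $k\geq 3$ one checks
\[
|G \setminus T_b| \geq |T_a| + (k-2)|T_b| + (k-1) \cdot 13 \Delta m + m \geq (k-1)(|T_a| + 13 \Delta m) + m,
\]
so a second application of \Cref{C2} to $G \setminus T_b$ with tree $T_a$ (and the full parameter $k$) completes the embedding.

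The argument is essentially bookkeeping, and I do not foresee a real obstacle. The subtle point is that the two invocations of \Cref{C2} play asymmetric roles: the smaller piece $B$ handles the \emph{larger} tree $T_b$ but with one fewer color ($k-1$), while the complement $G \setminus T_b$ handles the \emph{smaller} tree $T_a$ at the full value of $k$. This asymmetry matches the extremal construction behind \cref{RamseyLowerBound}, and it is exactly what makes the counts balance.
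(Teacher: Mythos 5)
Your proof is correct, but it takes a more roundabout route than the paper's. The paper simply applies \Cref{C2} twice in sequence to the \emph{same} graph: first find $T_a$ in $G$ (which works because $|T_a|\leq |T_b|$ makes the size hypothesis $|G|\geq (k-1)(|T_a|+13\Delta m)+m$ an easy consequence of the stated bound), then find $T_b$ in $G\setminus T_a$ (which works because removing $|T_a|$ vertices leaves exactly the threshold $(k-1)(|T_b|+13\Delta m)+m$). No case distinction is needed, since $(G\setminus T_a)^c$ is an induced subgraph of $G^c$ and hence automatically $K_m^k$-free. Your argument instead re-inlines the proof of \Cref{C2}, splitting on whether $G^c$ contains $K_{m,m'}$, recovering the forest in one shot via \Cref{C1} in the first case and recursing into a $K_m^{k-1}$-free part $B$ in the second. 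This is valid and has the small bonus of illustrating \emph{why} the bound $|T_a|+(k-1)(|T_b|+13\Delta m)+m$ arises (it is exactly the \Cref{C1} threshold for the union), but it does more bookkeeping than necessary: the key observation you could have exploited is that putting the \emph{smaller} tree first lets you avoid the bipartite case analysis entirely and just call \Cref{C2} twice as a black box.
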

\begin{proof}
We first apply \Cref{C2} to obtain a copy of $T_a$ in $G$. Now we let $G' = G \backslash T_a$ and apply \Cref{C2} to $G'$ to obtain a copy of $T_b$ in $G'$.
\end{proof}

The following lemma lets us find a copy of $T$ in a sufficiently large graph which contains a linked system and whose complement is $K^k_m$-free, but does contain $K^{k-1}_u$, for a sufficiently large $u$. 
The idea of the proof is to break up our tree into three parts---two forests $T_a$, $T_b$, and a collection of bare paths joining the forests. Then the forests $T_a$ and $T_b$ are found using Corollary~\ref{C3}, while the bare paths are found using the linked system.

\begin{lemma}\label{LemmaLinkedKmkFreeTree}
Let $n, m, k, \Delta \in \N$ with $k \geq 3$ and $n$ sufficiently large relative to $\Delta, k$ and let $d = 4 \cdot 10^{12} \frac{\log^4{n}}{\log{\log{n}}}$, $r = \lceil 10^3 \log^2{n}\rceil$ and $y = \lceil \log{n} \rceil$, such that $n\geq 2(d+1) m$.
Let $X, W, Z$ be disjoint subsets of a graph such that $(Z\cup X)^c$ is $K_m^k$-free with $|Z|\geq 0.99(k-1)n$.  
Let $T$ be a tree on $n$ vertices with $\Delta (T)\leq \Delta$, and at least $n/4r$ bare paths of length $r$.
Suppose that $X^c$ contains $K_u^{k-1}$ for $u\geq 2n/r$.
Suppose that $(X,W)$ is a $(n/2r, d^-, d^+)$-linked system for $d^-\leq y\leq d^+$.
Then $Z\cup X\cup W$ contains a copy of $T$.
\end{lemma}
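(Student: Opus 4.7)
My plan is to remove the interiors of several bare paths of $T$ to produce a skeleton forest $T'$, bipartite-$2$-color its components into two subforests $T_a$ and $T_b$, embed $T_a\cup T_b$ into $Z\cup X$ with the bare-path endpoints landing on prescribed vertices of $X$, and close the resulting gaps using the linked system $(X,W)$.

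\textit{Step 1 (decomposition).} I would pick $s=\Theta(n/\log^3 n)$ bare paths $P_1,\ldots,P_s$ of length $r$ from the $\ge n/(4r)$ available in $T$; this choice of $s$ ensures that, after the subdivisions in Step~3, the total number of sub-paths fits within the linked system's capacity $n/(2r)$. Removing the interiors yields a forest $T'$ with $s+1$ components and $|T'|=n-s(r-1)=(1-o(1))n$; let $Y$ denote the set of $2s$ bare-path endpoints. The auxiliary graph whose nodes are the components of $T'$ and whose edges are the removed paths is a tree (since $T$ is connected with exactly $s$ bridges removed), hence bipartite; the resulting $2$-coloring partitions $T'$ as $T_a\cup T_b$ so that each $P_i$ has one endpoint in $T_a$ and the other in $T_b$. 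By choosing the $P_i$ judiciously---exploiting the many bare paths of $T$ to vary the partition---we can ensure $\min(|T_a|,|T_b|)\ge cn$ for a constant $c=c(k)>0$.

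\textit{Step 2 (embedding).} I would embed $T_a\cup T_b$ into $Z\cup X$ with each of the $2s$ vertices of $Y$ mapped to a prescribed distinct vertex of $X$ (feasible since $|X|\ge 2(k-1)n/r\gg 2s$). Because $(Z\cup X)^c$ is $K_m^k$-free and
\[
|Z\cup X|\ge 0.99(k-1)n\ge |T_a|+(k-1)|T_b|+13\Delta(k-1)m+m,
\]
which follows from $\min(|T_a|,|T_b|)/n\ge c$ together with $0.99(k-1)>k/2$ for $k\ge 3$, Corollary~\ref{C3} furnishes an unconstrained embedding. To also pin the vertices of $Y$ onto their prescribed targets, I would adapt the inductive proof of Corollary~\ref{C3}: at each reduction $K_m^{k'}\to K_m^{k'-1}$ (passing to a subset with an $m$-vertex companion of non-neighbors), carry the prescribed-target requirement along, and apply a multi-root strengthening of Lemma~\ref{L3} at the $K_{m,m'}$-free base case.

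\textit{Step 3 and main obstacle.} For each $P_i$ with embedded endpoints $x_i,y_i\in X$, I would realize $P_i$ as a walk of length $r$ in $X\cup W$ from $x_i$ to $y_i$, disjoint from the forest embedding. Since $d^+$ may be smaller than $r$, I would subdivide into $\lceil r/y\rceil=O(\log n)$ sub-paths of lengths in $[d^-,d^+]$ summing to $r$, with fresh break-points chosen from $X$; such a choice of lengths exists because $d^-\le y\le d^+$. Over all $i$ this gives at most $s\lceil r/y\rceil\le n/(2r)$ disjoint sub-paths, supplied simultaneously by the linked system with interiors in $W$. Concatenation reconstructs each $P_i$ and yields a copy of $T$ inside $Z\cup X\cup W$. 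The main obstacle is the multi-root embedding in Step~2: the $2s$ vertices of $Y$ are typically distributed with several per component of $T'$, so neither Corollary~\ref{C3} (existence only) nor Lemma~\ref{L3} (one root per tree) directly applies. Strengthening Lemma~\ref{L3} to handle several prescribed vertices per tree, and threading this strengthening through the $K_m^{k'}\to K_m^{k'-1}$ inductive reduction inside the proof of Corollary~\ref{C3} while maintaining the expansion conditions, is the heart of the argument; the decomposition and the linked-system routing are comparatively routine.
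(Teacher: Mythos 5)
Your plan has a genuine gap, which you identify yourself but do not resolve: Step~2 requires a ``multi-root'' version of \Cref{L3} that places \emph{many} prescribed vertices per component of $T'$ onto specified targets in $X$, threaded through the inductive reduction inside \Cref{C3}. That strengthening is not routine --- \Cref{L3} allows exactly one root per tree, and the $2s$ endpoints cluster several per component --- and the proposal stops exactly where the real work begins. The paper's proof avoids the problem entirely by \emph{not} prescribing where the bare-path endpoints land.

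The paper's decomposition is also different in a way that matters. Rather than removing whole bare-path interiors and then bipartite-$2$-coloring the component graph, the paper keeps the middle of each bare path: it sets $T_a$ to be the $n/4r$ middle segments of length $r-2y-4$, and $T_b$ to be $T$ minus the interiors of the bare paths, so that recovering $T$ requires joining $n/2r$ pairs of endpoints by paths of length exactly $y+2$. Both $T_a$ and $T_b$ are embedded \emph{freely} into a set $Z'\subseteq Z$ via \Cref{C3} (so no root constraints at all); the endpoints to be joined therefore all lie in $Z'$. A separate claim, proved from the $K_m^k$-freeness of $(Z\cup X)^c$ and the $K_u^{k-1}$ inside $X^c$, shows that every $S\subseteq Z'$ with $|S|\le n/r$ has $|N(S)\cap X|\ge |S|$; by \Cref{L4} this yields a matching sending the $n/r$ endpoints in $Z'$ to distinct vertices of $X$. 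Each joining path is then assembled as one matched edge into $X$, a length-$y$ path from the $(n/2r,d^-,d^+)$-linked system with interior in $W$, and one matched edge back into $Z'$ --- total length $y+2$, as required. In particular the linked system is invoked exactly once per gap with length $y$, which lies in $[d^-,d^+]$ by hypothesis, so your Step~3 subdivision into $O(\log n)$ sub-paths per gap is unnecessary. The one-edge matching trick is the key idea you are missing; without it, your Step~2 does not go through.
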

\begin{proof}
We first find a subset of $Z$ with appropriate expansion properties.
\begin{claim}\label{ClaimZExpands}
There exists $Z' \subseteq Z$ with $|Z'| \geq 0.9(k-1)n$ such that $|N(S)\cap X|\geq |S|$ for any $S \subseteq Z'$ with $|S|\leq n/r$.% with $|S|\leq m$ and $|N(S)\cap X|\geq n/r$ for any $S\subseteq Z'$ with $|S|\geq m$.
\end{claim}
\begin{proof}
Let $U_1, \dots, U_{k-1}$ be the parts of the $K_u^{k-1}$ in $X^c$.
If there exists $S \subseteq Z$ with $|S| \geq m$ and $|N_{G^c}(S) \cap U_i| \geq m$ for all $i$, then we can take subsets of size $m$ from $S, N_{G^c}(S) \cap U_1,  \ldots, N_{G^c}(S) \cap U_{k-1}$ to obtain a $K^k_m$ in $\left( X \cup Z \right)^c$, a contradiction. Thus for all $S \subseteq Z$ with $n/r\geq |S| \geq m$, we have that $|N_G(S) \cap X| \geq u - m\geq n/r\geq |S|$ (using $m=o(n)$.)

Now let $A \subset Z$ with $|A| \leq m-1$ be maximal such that $|N_G(A) \cap X| <   |A|$, and let $Z' = Z \backslash A$. We claim that for all $S \subseteq Z'$ with $|S| \leq m$, $|N_G(S) \cap X| \geq |S|$. Indeed, otherwise $|N_G(A \cup S) \cap X| < |A \cup S|$, so we must have $m \leq |A \cup S| \leq 2m$ by maximality of $A$. But then
\[ 2m \geq |A \cup S| > |N_G(A \cup S)| \geq n/r,\]
a contradiction to $n\geq 2(d+1)m$. 
\end{proof}

Now let $T_a$ be a collection of $n/4r$ disjoint paths of length $r - 2y - 4$, so that $|T_a| = n(r - 2y - 3)/(4r) \leq n/4$ and let $T_b$ be $T$ without the interior vertices of the $n/4r$ bare paths of length $r$, so that $|T_b| = n - n(r-1)/4r = 3n/4 + n/(4r)$. Since we can always add edges to $T_a$ and $T_b$ to make them trees without increasing the maximum degree, and 
\[ |Z'| \geq 0.9(k-1)n \geq \frac{n}{4} + (k-1)\left(\frac{3n}{4} + \frac{n}{4r} + 13 \Delta m\right) + m, \]
we may apply \Cref{C3} to conclude that $Z'$ has a copy of $T_a$ and $T_b$. Let $x_a, y_a \in Z'$ for $1 \leq a \leq n/2r$ be the endpoints of those copies so that if we connect $x_a$ with $y_a$ by disjoint paths of length $y + 2$ for all $i$, we obtain an embedding of $T$. 
By Lemma~\ref{L4} and the claim, there is a matching from $\{x_a: 1\leq a\leq n/2r\}\cup \{y_a: 1\leq a\leq n/2r\}$ to some set $\{x'_a: 1\leq a\leq n/2r\}\cup \{y'_a: 1\leq a\leq n/2r\}$ contained in $X$.  
Since $(X,W)$ is a $(n/2r, d^-, d^+)$-linked system for $d^-\leq y\leq d^+$, there are disjoint $x'_a$ to $y'_a$ paths of length $y$ in $W$ as required.
\end{proof}

Next we prove two lemmas which help us construct linked systems. \Cref{LemmaJoinTwoLinkedSets} lets us combine 2 linked systems into a bigger linked system, provided that there are sufficiently many short paths between them. In \Cref{LemmaJoinManyLinkedSets}, we combine several linked systems with many short paths between them into a big linked system, by making repeated use of \Cref{LemmaJoinTwoLinkedSets}.

\begin{lemma}\label{LemmaJoinTwoLinkedSets}
Suppose that we have sets of vertices $X_1, X_2, W_1, W_2$ with $(X_1\cup W_1)\cap (X_2\cup W_2)=\emptyset$, such that $(X_1,W_1)$ is a $(s_1, d_1^-, d_1^+)$-linked system  and $(X_2, W_2)$ is a $(s_2, d_2^-, d_2^+)$-linked system.
Suppose that there are disjoint paths $P_1, \dots, P_t$ of length $\leq 3$ from $X_1$ to $X_2$ internally outside $X_1\cup X_2\cup W_1\cup W_2$. 
Then $\left(X_1\cup X_2, W_1\cup W_2\cup\bigcup_{i=1}^t P_t\right)$ is a $(s, d^-, d^+)$-linked system  for $d^-=d^-_1+d^-_2 + 3$, $d^+=\min(d^+_1, d^+_2)$, and $s = \min(s_1, s_2, t/3).$
\end{lemma}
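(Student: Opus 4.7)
The plan is to partition the prescribed pairs $(x_i,y_i)$ by which of $X_1, X_2$ their endpoints land in: let $A$ be indices with both endpoints in $X_1$, $B$ with both in $X_2$, and $C$ with one endpoint in each, and set $n_A = |A|$, etc., so $n_A + n_B + n_C = s$. Pairs in $A$ will be routed entirely through $W_1$ using the linked system $(X_1,W_1)$; pairs in $B$ entirely through $W_2$; and pairs in $C$ will be routed $x_i \leadsto a_i$ inside $W_1$, then along one of the connecting paths $P_{j(i)}$ to $b_i \in X_2$, then $b_i \leadsto y_i$ inside $W_2$.

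The first step is to pick a distinct $j(i) \in \{1,\dots,t\}$ for each $i \in C$ whose endpoints $a_{j(i)} \in X_1$ and $b_{j(i)} \in X_2$ avoid all previously fixed endpoints. The forbidden vertices in $X_1$ are the $2n_A + n_C$ endpoints lying there, and similarly at most $2n_B + n_C$ in $X_2$; since the $P_j$'s are pairwise disjoint, each forbidden vertex kills at most one $P_j$, so at most $2(n_A+n_B+n_C) = 2s$ paths are blocked. As $s \le t/3$, there remain $t - 2s \ge t/3 \ge n_C$ usable paths, from which one picks a distinct $j(i)$ greedily.

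Next, for each $i \in C$ I split the target length as $d_i = \alpha_i + \ell_{j(i)} + \beta_i$, where $\ell_{j(i)}\in\{1,2,3\}$ is the length of the chosen connecting path, and I want $\alpha_i \in [d_1^-,d_1^+]$, $\beta_i \in [d_2^-,d_2^+]$. This is possible because $d_i - \ell_{j(i)} \in [d^- - 3, \, d^+ - 1] = [d_1^- + d_2^-, \, \min(d_1^+,d_2^+)-1] \subseteq [d_1^-+d_2^-, d_1^++d_2^+]$, so one may take, e.g., $\alpha_i = \max\bigl(d_1^-,\, d_i-\ell_{j(i)}-d_2^+\bigr)$ and $\beta_i = d_i - \ell_{j(i)} - \alpha_i$. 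Now I invoke the linked system $(X_1,W_1)$ with the list $\{(x_i,y_i)\}_{i \in A} \cup \{(x_i,a_{j(i)})\}_{i \in C}$ (having $n_A+n_C \le s \le s_1$ pairs, and distinct endpoints by the choice of $j(i)$), assigning length $d_i$ to pairs from $A$ (which lies in $[d_1^-,d_1^+]$ since $d_1^- \le d^- \le d_i \le d^+ \le d_1^+$) and length $\alpha_i$ to pairs from $C$. I invoke $(X_2,W_2)$ symmetrically.

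Concatenating gives, for $i \in A \cup B$, a path of length $d_i$ through $W_1$ or $W_2$ respectively, and for $i \in C$ the concatenation of the $W_1$-path, $P_{j(i)}$, and the $W_2$-path, whose total length is $\alpha_i + \ell_{j(i)} + \beta_i = d_i$. Disjointness follows because the $W_1$-paths are pairwise disjoint and interior to $W_1$; the $W_2$-paths pairwise disjoint and interior to $W_2$; the $P_{j(i)}$'s pairwise disjoint and interior to $V \setminus (X_1 \cup X_2 \cup W_1 \cup W_2)$; and the three regions are disjoint. The main obstacle is really just the bookkeeping in the first step---ensuring that the $2n_A + 2n_C$ vertices fed into the linked system on $X_1$ (and symmetrically for $X_2$) are pairwise distinct---and the key numerical content is the inequality $t \ge 3s$, which is precisely the hypothesis $s \le t/3$.
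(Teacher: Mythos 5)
Your proof is correct and follows essentially the same route as the paper's: partition the $s$ required pairs by which of $X_1, X_2$ each endpoint lands in, use disjointness of the $P_j$'s together with $t \geq 3s$ to reserve $n_C$ connecting paths avoiding all prescribed endpoints, decompose each cross-pair's target length into a $W_1$-piece, a connector, and a $W_2$-piece, and then invoke the two linked systems once each before concatenating. The only small stylistic difference is in the length split: the paper always takes the $W_1$-piece to have length exactly $d_1^-$ (so $d_i^2 = d_i - d_1^- - |E(P_i)|$), whereas you choose $\alpha_i = \max(d_1^-, d_i - \ell_{j(i)} - d_2^+)$ adaptively; both satisfy the required range constraints for the same arithmetic reasons, so this is purely cosmetic.
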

\begin{proof}
Let $x_1, y_1,  \dots, x_s, y_s$ be vertices in $X_1\cup X_2$  and $d_1, \dots d_s\in [d^-, d^+]$ as in the definition of $(s, d^-, d^+)$-linked system.  To prove the lemma we need to find disjoint paths $Q_1, \dots, Q_s$ with $Q_i$ a length $d_i$ path from $x_i$ to $y_i$.
Without loss of generality $x_1, y_1,  \dots, x_s, y_s$ are labeled so that $x_1, y_1, \dots, x_a, y_a\in X_1$, $x_{a+1}, y_{a+1}, \dots, x_b, y_b\in X_2$, $x_{b+1}, \dots, x_{s}\in X_1$, and $y_{b+1}, \dots, y_{s}\in X_2$ for some $a$ and $b$. 

Since the paths $P_1, \dots, P_t$ are disjoint and have only $2$ vertices each in $X_1\cup X_2$, we have that  $\leq 2s$ of the paths $P_1, \dots, P_t$ intersect $\{x_1, \dots, x_s, y_1, \dots, y_s\}$.
Since $t\geq 3s$, without loss of generality, we can suppose that the paths $P_{b+1}, \dots, P_{s}$ are disjoint from $\{x_1, \dots, x_s, y_1, \dots, y_s\}$. For each $i=b+1, \dots, s$, let $y'_i$ be the endpoint of $P_{i}$ in $X_1$, and $x'_i$ the endpoint of $P_i$ in $X_2$.
For each $i=b+1, \dots, s$, let $d^1_i=d_1^-$ and $d^2_i=d_i-d_1^- - |E(P_i)|$.
 Notice that by assumption we have $d^-_1+d^-_2 + 3=d^-\leq d_i\leq d^+=\min(d^+_1, d^+_2)$ which combined with $|E(P_i)| \leq 3$ implies that $d^-_1\leq d^1_i\leq d^+_1$ and   $d^-_2\leq d^2_i\leq d^+_2$.
 
 Apply the definition of $(X_1, W_1)$ being a $(s_1, d_1^-, d_1^+)$-linked system  in order to find paths $Q_1, \dots, Q_a, Q_{b+1}^1, \dots, Q_{s}^1$ with $Q_i$ a length $d_i$ path  from $x_i$ to $y_i$ internally contained in $W_1$, and $Q_i^1$ a length $d_i^1$ path  from $x_i$ to $y_i'$ internally contained in $W_1$.
 Similarly, apply the definition of $(X_2, W_2)$ being a $(s_2, d_2^-, d_2^+)$-linked system  to find paths $Q_{a+1}, \dots, Q_b, Q_{b+1}^2, \dots, Q_{s}^2$ with $Q_i$ a length $d_i$ path  from $x_i$ to $y_i$ internally contained in $W_2$, and $Q_i^2$ a length $d_i^2$ path  from $x_i'$ to $y_i$ internally contained in $W_2$.
For $i= b+1, \dots, s$, let $Q_i=Q_{i}^1+P_i+Q_{i}^2$ to get a length $d_i=d_i^1+d_i^2 + |E(P_i)|$ path going from $x_i$ to $y_i$.
 Now the paths $Q_1, \dots, Q_s$ are paths from $x_1, \dots, x_s$  to $y_1, \dots, y_s$ internally contained in $W_1\cup W_2\cup\bigcup_{i=1}^t P_t$  as in the definition of $(s, d^-, d^+)$-linked system.
\end{proof}

\begin{lemma}\label{LemmaJoinManyLinkedSets}
Let $G$ be a graph and $k, s, d^-, d^+\in \N$. For $i=1, \dots, k$ suppose that we have a $(s, d^-, d^+)$-linked system  $(X_i, W_i)$ with $(X_i\cup W_i)\cap (X_j\cup W_j)=\emptyset$ for $i\neq j$.
Suppose that we have a connected graph $F$ with vertex set $\{1, \dots, k\}$ such that for all $uv\in E(F)$ there is a family $\mathcal P_{uv}$ of $t$ disjoint paths of length $\leq 3$ from $X_u$ to $X_v$ internally outside $\bigcup_{i=1}^k X_i\cup W_i$ with $t\geq 15 k s$.
Then    $(X, W)$ is a $( s,  k(d^- + 3),  d^+)$-linked  system  for $X=X_1\cup\dots\cup X_k$ and $W=W_1\cup\dots\cup W_k\cup\bigcup_{e\in H} \mathcal P_{e}$.
\end{lemma}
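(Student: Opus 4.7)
The plan is to iteratively merge the $k$ linked systems into a single one by repeatedly applying \Cref{LemmaJoinTwoLinkedSets} along a spanning tree of $F$. First fix a spanning tree $F'$ of $F$ rooted at some vertex, and order its vertices $i_1, i_2, \ldots, i_k$ so that each $i_r$ with $r \geq 2$ has a neighbor $i_{j(r)}$ in $F'$ with $j(r) < r$ (for instance via BFS from the root). Then build a sequence of clusters $(X^{(r)}, W^{(r)})$ by setting $(X^{(1)}, W^{(1)}) = (X_{i_1}, W_{i_1})$ and, for $r \geq 2$, combining the current cluster with $(X_{i_r}, W_{i_r})$ via \Cref{LemmaJoinTwoLinkedSets} using $3s$ carefully chosen paths from $\mathcal{P}_{i_{j(r)} i_r}$.

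The main technical point is arranging, at each merge step, for the $3s$ chosen paths in $\mathcal{P}_{i_{j(r)} i_r}$ to have interiors disjoint from the cluster's current $W^{(r-1)}$, as required to apply \Cref{LemmaJoinTwoLinkedSets}. By hypothesis the paths of every $\mathcal{P}_e$ are internally disjoint from $\bigcup_i X_i \cup W_i$, so the only potential obstruction comes from paths added to the cluster at previous merges. Since we add exactly $3s$ paths of length at most $3$ per step, these contribute at most $2 \cdot 3s \cdot (r-2) \leq 6s(k-2)$ forbidden internal vertices in total. The paths in $\mathcal{P}_{i_{j(r)} i_r}$ are pairwise internally disjoint, so each forbidden vertex blocks at most one of them, leaving at least $15ks - 6s(k-2) \geq 3s$ valid choices.

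Once each merge is justified, the parameters evolve predictably. Applying \Cref{LemmaJoinTwoLinkedSets} to an $(s, d^-_{r-1}, d^+)$-linked cluster and the $(s, d^-, d^+)$-linked system $(X_{i_r}, W_{i_r})$ with $t' = 3s$ paths yields an $(s, d^-_{r-1} + d^- + 3, d^+)$-linked system, since $\min(s, s, 3s/3) = s$ and $\min(d^+, d^+) = d^+$. By induction on $r$, $d^-_r = r d^- + 3(r-1)$, so after all $k-1$ merges, $d^-_k = k d^- + 3(k-1) \leq k(d^- + 3)$. Thus $(X^{(k)}, W^{(k)})$ is an $(s, k(d^- + 3), d^+)$-linked system. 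Finally $X^{(k)} = X_{i_1} \cup \ldots \cup X_{i_k} = X$ and $W^{(k)} \subseteq W_1 \cup \ldots \cup W_k \cup \bigcup_{e \in F'} \mathcal{P}_e \subseteq W$; since enlarging the second coordinate preserves the linked system property, $(X, W)$ is itself an $(s, k(d^- + 3), d^+)$-linked system, as required.
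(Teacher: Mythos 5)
Your proposal is correct and follows essentially the same approach as the paper: take a spanning tree of $F$, order its vertices so each new vertex attaches to the existing cluster, and iteratively merge via \Cref{LemmaJoinTwoLinkedSets}, using a counting argument at each step to extract $3s$ usable paths. The only cosmetic difference is that the paper pre-selects all the families $\mathcal P'_{e_i}$ upfront counting whole paths ($\leq 4$ vertices each), while you select on the fly and observe that only interior–interior disjointness need be arranged (since new path interiors automatically avoid the $X$-sets, hence old endpoints); both counts close with room to spare.
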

\begin{proof}
Without loss of generality we can suppose that $F$ is a tree with edges $e_2, \dots, e_{k}$, and that the vertices of $F$ are ordered so that for each $i$, the edge $e_i$ goes from vertex $i$ to some vertex in $\{1, \dots, i-1\}$. Notice that this ensures that the induced subgraph $F[\{1, \dots, i\}]$ is a tree for every $i$.

For all $e_i\in E(F)$, choose a subfamily $\mathcal P'_{e_i}\subseteq \mathcal P_{e_i}$ with $|\mathcal P'_{e_i}|=3s$ 
 such that the paths in $\mathcal P'_{e_i}$ are disjoint from those in $\mathcal P'_{e_j}$ for $i\neq j$. 
This is done by choosing the paths in $\mathcal P'_{e_i}$ one by one for each $i$ always choosing them to be disjoint from  $\bigcup_{j=2}^{i-1}\bigcup_{P\in \mathcal P'_{e_j}}{P}$.
This is possible since $|\bigcup_{j=2}^{i-1}\bigcup_{P\in \mathcal P'_{e_j}}{P}| \leq 12is$ (using the fact that the paths in all $\mathcal P_{e_j}$ have length $\leq 3$), and since there are $t\geq 15ks>12is+3s$ paths in $P_{e_{i}}$ which are all disjoint.

We will use induction on $i$ to prove that ``($X_1\cup\dots\cup X_i$,  $W_1\cup\dots\cup W_i\cup\bigcup_{j= 2}^i \mathcal P'_{e_j}$) is a $( s,  i(d^- + 3),  d^+)$-linked system.'' The initial case ``$i=1$''  follows from  $(X_1, W_1)$ being a $(s, d^-, d^+)$-linked system.
Suppose that $i\geq 2$, and $(X', W')$ is a $( s,  (i-1)(d^- + 3),  d^+)$-linked system for  $X'=X_1\cup\dots\cup X_{i-1}$  and $W'=W_1\cup\dots\cup W_{i-1}\cup\bigcup_{j= 2}^{i-1} \mathcal P'_{e_j}$.

By construction of $\mathcal P'_{e_{i}}$ and the initial assumption that paths in $\mathcal P_{e_i}$ are internally disjoint from $\bigcup_{j=1}^k X_j\cup W_j$ we have that paths in $\mathcal P'_{e_i}$ are internally disjoint from $X'\cup W'$ and $X_i\cup W_i$.
From the lemma's assumptions,  for $a< b$ we have $(X_a\cup W_a)\cap (X_b\cup W_b)=\emptyset$ and we know that paths in  $\mathcal P_{e_a}$ are disjoint from $X_b\cup W_b$. These imply $(X'\cup W')\cap (X_i\cup W_i)=\emptyset$.
 Also, since $e_i\in E(F)$, we have that every path in $\mathcal P'_{e_i}$ goes from $X'$ to $X_i$ and has length $\leq 3.$
By Lemma~\ref{LemmaJoinTwoLinkedSets}, we have that $(X'\cup X_i, W'\cup W_i\cup\bigcup_{j= 1}^i \mathcal P'_{e_j})$ is a $( \min(s, |\mathcal P'_{e_i}|/3) ,  (i-1)(d^- + 3)+d^-+3,  d^+)$-linked system. Since $|\mathcal P'_{e_i}|/3=s$, this completes the induction step.
\end{proof}

We will need the well known folklore result that every tree  $T$ can be separated into two parts of size $\leq 2|T|/3$ with one vertex (see e.g. \cite{Chung}, Corollary 2.1.)
\begin{lemma} \label{L8}
The vertices of any tree $T$ can be partitioned into a vertex  $u$ and two disjoint sets $T_a$ and $T_b$ such that $|T_a|, |T_b|\leq 2n/3$ and there are no edges between $T_a$ and $T_b$.
\end{lemma}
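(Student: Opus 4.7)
The plan is to take $u$ to be a centroid of $T$ and let $T_a$, $T_b$ be appropriately grouped unions of the connected components of $T-u$. Formally, set $f(v) := \max_C |C|$, where $C$ ranges over the connected components of $T-v$, and choose $u$ to minimize $f(u)$ over all $v \in V(T)$.

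The first step is to show that the centroid satisfies $f(u) \leq n/2$. I would prove this by contradiction: suppose some component $C$ of $T-u$ has $|C| > n/2$, and let $v$ be the unique neighbor of $u$ lying in $C$. Removing $v$ from $T$ produces one component containing $u$ of size $n-|C| < n/2 < |C|$, while every other component of $T-v$ is a subtree of $C-v$ and hence has size at most $|C|-1$. Thus $f(v) < f(u)$, contradicting the minimality of $u$.

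With $u$ fixed, list the components of $T-u$ as $C_1, \dots, C_k$, each of size at most $n/2$ with $\sum_i |C_i| = n-1$. It remains to partition these components into two families $T_a, T_b$, each of total size at most $2n/3$; since $T_a$ and $T_b$ will be unions of whole components of $T-u$, no edges of $T$ will lie between them. I would split into two cases. If some $|C_i| \geq n/3$, set $T_a := C_i$ (of size at most $n/2 \leq 2n/3$) and let $T_b$ be the union of the remaining components (of total size $n-1-|C_i| \leq 2n/3 - 1$). Otherwise every $|C_i| < n/3$, and I would add components to $T_a$ one at a time, stopping the first time that $|T_a| \geq n/3$; this must occur because $\sum_i |C_i| = n-1 \geq n/3$, and at the moment of stopping we have $|T_a| < n/3 + n/3 = 2n/3$ while $|T_b| = n-1-|T_a| \leq 2n/3 - 1$.

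The centroid argument itself is routine; the only real subtlety is the balancing step at the end. A naive ``always add to the smaller side'' greedy need not respect the $2n/3$ bound, because the largest component can be as large as $n/2$, so the case split on whether any single component already reaches size $n/3$ is what makes the partition work cleanly. The trivial cases $n \leq 2$ are handled by taking $u$ to be any vertex and leaving at least one of $T_a, T_b$ empty.
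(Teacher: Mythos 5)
The paper does not prove Lemma~\ref{L8} itself; it simply cites Chung's separator theorem (Corollary 2.1 of \cite{Chung}) as a folklore result, so there is no in-paper proof to compare against. Your proof is the standard centroid argument and it is correct: the contradiction showing $f(u)\le n/2$ is right (the component of $T-v$ containing $u$ has exactly $n-|C|$ vertices, and every other component of $T-v$ lies inside $C-v$), and the two-case balancing step --- take $C_i$ alone if some $|C_i|\ge n/3$, otherwise greedily accumulate until the running total first reaches $n/3$ --- cleanly yields $|T_a|,|T_b|<2n/3$, with the degenerate $n\le 2$ cases handled directly. One nit: your claim that a naive ``add to the smaller side'' greedy might violate the $2n/3$ bound is asserted but not justified; it is irrelevant to correctness since you don't rely on it, but as stated it is speculation rather than a demonstrated obstacle.
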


%A balanced separator of a graph $G$ is a subset $X \subseteq G$ such that $G \backslash X$ can be partitioned into 2 parts of size at most $2 |G| / 3$ with no edges between them. The following lemma is folklore, and so we state it without proof. 

%\begin{lemma} \label{L8}
%Any tree has a one vertex balanced separator.
%\end{lemma}

The following lemma shows that if we have a $2$-edge-coloured complete graph on $(k-1)(n-1) + m_1$ vertices  whose colouring is close to Burr's extremal construction, then it either contains a red copy of $T$ or a blue copy of $K_{m_1, \dots, m_k}$

\begin{lemma} \label{L9}
Suppose that we have numbers $n, k,  \Delta, m_1, \dots, m_k \in \N$  with $k \geq 3$, $m_1 \leq m_2 \leq \ldots \leq m_k$,  $n$ large enough relative to $\Delta, k$ and $n\geq  2 (d+1) m_k$ where $d = 4 \cdot 10^{12} \frac{\log^4{n}}{\log{\log{n}}}$.

Let $T$ be a tree with $|T|=n$ and $\Delta(T) \leq \Delta$.
Let $G$ be a graph with $(k-1)(n-1) + m_1$ vertices that has disjoint vertex sets $H_1, \ldots, H_{k-1}$ with $|H_i| \geq 0.9 n$, such that there are no edges between $H_i$ and $H_j$ for all $i \neq j$. If $G^c$ has no $K_{m_1, \dots, m_k}$, then $G$ contains a copy of $T$.
\end{lemma}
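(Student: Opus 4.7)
The plan is to follow the strategy sketched in the overview for the case $k \geq 3$. First, I trim each $H_i$ by greedily removing subsets $S \subseteq H_i$ with $|S| \leq m_k$ and $|N_G(S) \cap H_i| < d|S|$, yielding $H_i' = H_i \setminus X_i$ with $|X_i| \leq m_1 - 1$. The bound on $|X_i|$ follows from $G^c$ being $K_{m_1, \dots, m_k}$-free together with the absence of edges between the $H_j$'s: a larger $X_i$ with poor $G$-neighborhood in $H_i$ could be combined with $m_j$-subsets of the other $H_j$'s, and a small subset of $H_i \setminus X_i \setminus N_G(X_i)$, to realize $K_{m_1, \dots, m_k}$ in $G^c$. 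Combined with the induced $K_{m_k, m_k}$-freeness of $G^c[H_i']$ (which handles medium-sized subsets), this makes each $H_i'$ a $d$-expander in the sense needed for \Cref{L3} and \Cref{L6}.

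Let $V_0 = V(G) \setminus \bigcup_i H_i'$. I distinguish two cases. \textbf{Case (a):} Some $v \in V_0$ has at least $\Delta$ $G$-neighbors in each of two distinct $H_a', H_b'$. Apply \Cref{L8} to $T$ to obtain a partition $V(T) = T_a \sqcup \{u\} \sqcup T_b$ with $|T_a|, |T_b| \leq 2n/3$ and no $T$-edges between $T_a$ and $T_b$; each of $T_a, T_b$ is a rooted forest whose roots are the children of $u$ on that side, hence at most $\Delta$ roots in total. Pick these root-images as distinct $G$-neighbors of $v$ in $H_a'$ and $H_b'$ respectively, and apply \Cref{L3} inside $H_a'$ to embed $T_a$ and inside $H_b'$ to embed $T_b$, using the expansion of each part. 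Mapping $u \mapsto v$ completes a copy of $T$ in $G$.

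\textbf{Case (b):} Every $v \in V_0$ has $\geq \Delta$ $G$-neighbors in at most one $H_i'$. Assign each such $v$ to this unique set (or arbitrarily if none), obtaining parts $H_1'', \dots, H_{k-1}''$ partitioning $V(G)$. Iteratively move to a scrap pile $R$ any vertex with $\geq \Delta$ $G$-neighbors in some other part, ending with the property that each vertex in $H_i''$ has $< \Delta$ $G$-neighbors in each $H_j''$, $j \neq i$. Then re-apply the expansion trimming on each $H_i''$ to obtain $H_i'''$, sending the trimmed vertices into $R$. If some $|H_i'''| \geq n$, then $G^c[H_i''']$ is $K_{m_k}^2$-free (a $K_{m_k, m_k}$ would contain $K_{m_{k-1}, m_k}$, which is excluded by the hypothesis) and $H_i'''$ has the required expansion, so the $k=2$ case of \Cref{TmainKmmmm} (already proved) yields a copy of $T$ in $H_i'''$. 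Otherwise all $|H_i'''| \leq n-1$, which forces $|R| \geq (k-1)(n-1) + m_1 - (k-1)(n-1) = m_1$; I then pick disjoint subsets $A_{i+1} \subseteq H_i'''$ of size $m_{i+1}$ for $i = 1, \dots, k-1$, with no pairwise $G$-edges (feasible by the cleanup bound on cross-degrees, since each $A_{i+1}$ must avoid the at most $\Delta \sum_{j} m_{j+1}$ vertices of $H_i'''$ adjacent to previously chosen $A_j$), and finally $A_1 \subseteq R$ of size $m_1$ with no $G$-edges to $\bigcup_{i \geq 2} A_i$, producing a $K_{m_1, \dots, m_k}$ in $G^c$ and contradicting the hypothesis.

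The main obstacle is the bookkeeping in Case (b): showing that the cleanup terminates with $R$ controlled enough so that either some $H_i'''$ still has $\geq n$ vertices, or we can truly realize the $K_{m_1, \dots, m_k}$ construction. In particular, the feasibility of choosing $A_1 \subseteq R$ disjoint from $N_G(\bigcup_{i \geq 2} A_i)$ requires propagating the "few cross-edges" property from $V_0$ through the reassignment, the move-to-$R$ cleanup, and the subsequent expansion-trimming, so that $R$-vertices inherit a sparse-edge structure toward each $H_i'''$. This may require a slightly more refined cleanup than the naive one above (for example, also bounding degrees from $R$ into each $H_i'''$), but the overall case-analysis and the use of \Cref{L3}, \Cref{L8}, and the $k=2$ case of \Cref{TmainKmmmm} remain the conceptual backbone.
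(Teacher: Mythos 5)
Your two-case split (a vertex $v$ with $\geq\Delta$ neighbours in two parts, versus all outside vertices attached to a unique part), the use of \Cref{L8} and \Cref{L3} in the first case, and the ``if all parts are $\leq n-1$ then the removed vertices give a $K_{m_1,\dots,m_k}$ in $G^c$'' contradiction in the second case are all exactly the paper's skeleton. However, you introduce an iterative ``move to a scrap pile $R$'' cleanup that the paper does not perform, and you correctly flag that you cannot see how to make it converge---this is a genuine gap, and the reason it arises is that you are aiming for the wrong invariant. You want every reassigned vertex to have $<\Delta$ neighbours into the other \emph{reassigned parts} $H_j''$, but this is both hard to guarantee (hence the unbounded cleanup) and unnecessary. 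The paper keeps the trimmed cores $H_j'\subseteq H_j$ fixed throughout; since the $H_j'$ are subsets of the original pairwise-nonadjacent $H_j$, there are \emph{no} edges between distinct $H_i'$ and $H_j'$, and each reassigned vertex in $G_i$ has $<\Delta$ neighbours into $H_j'$ for $j\neq i$ \emph{automatically}, by the definition of the assignment rule. The final $K_{m_1,\dots,m_k}$ is then built by taking $S_j\subseteq H_j'\setminus N(\bigcup_i Z_i)$ (the removed set $\bigcup Z_i$ has size $O(km)$ and its neighbourhood into each $H_j'$ is at most $dm + k\Delta m = o(n)$), together with any $m_1$-subset of $\bigcup Z_i$; no edges among the chosen parts exist because the $S_j$ live in the original $H_j'$, not in the reassigned $G_j$. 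Choosing the parts from $H_j'$ rather than from your $H_i'''$ dissolves the cross-edge bookkeeping entirely, and no cleanup stage is needed.

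A secondary issue: in the ``some $|H_i'''|\geq n$'' subcase you invoke the already-proved $k=2$ case of \Cref{TmainKmmmm}, but that statement requires a host graph of $\geq n+m_1-1$ vertices, which $H_i'''$ need not have. The right tool is \Cref{L6}, which only needs $\geq n$ vertices together with $K_m^2$-freeness and the local expansion you have arranged. This in turn requires $T$ to have $\geq n/4r$ disjoint bare paths of length $r$, so (as the paper does) you should open the proof of \Cref{L9} by disposing of the many-leaves case via \Cref{Tmanyleaves}/\Cref{L5} and then assuming many bare paths for the remainder. Also, the initial trimming bound should be $|X_i|\leq m_k-1$ (argued from $K_{m_k}^2$-freeness of each $H_i^c$, which follows since a $K_{m_k}^2$ in some $H_i^c$ plus $m_k$-subsets of the other $H_j$ would yield a $K_{m_k}^k\supseteq K_{m_1,\dots,m_k}$ in $G^c$), not $m_1-1$ as you wrote; the argument you sketch for that bound is not the one that works here.
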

\begin{proof}
Fix $m=m_k$ and $r = \lceil 10^3 \log^2{n} \rceil$.
Notice that we have $n\geq 2(d+1)m$ and  $G^c$ has no $K_{m}^k$.
If $T$ has $\geq n/4r$ leaves, then since $n/4r\geq 13 \Delta |K_{m_1, \dots, m_k}|+1$ we are done by Theorem~\ref{Tmanyleaves}. Therefore, by Lemma~\ref{L1}, we may assume that $T$ has  at least $n/4r$ bare paths of length $r$. 

We first need the following claim.
\begin{claim}
There exist $H'_i \subseteq H_i$ with $|H'_i| \geq 0.8n$ such that for all $S \subseteq H'_i$ with $|S| \leq m$, we have $| N_{H'_i}(S) | \geq 5 \Delta |S|$ and for all $S \subseteq H'_i$ with $m \leq |S| \leq 2m$, we have $| N_{H'_i}(S) | \geq 2n / 3 + 10 \Delta m$.
\end{claim}
\begin{proof}
First observe that for each $i$, $H_i^c$ has no copy of $K^2_m$, or else we could take such a copy together with $m$ vertices from each $H_j : j \neq i$, to obtain a $K^k_m$ in $G^c$, a contradiction. Thus for any $S \subseteq H_i$ with $m \leq |S| \leq 2m$ we have $|N_{H_i}(S)| \geq |H_i|-|S|-m\geq |H_i| - 3m  \geq 0.8 n$.

Now for each $i$, choose a maximal $X_i \subseteq H_i$ with $|X_i| \leq m-1$ such that $|N_{H_i}(X_i)| < 5 \Delta |X_i|$, and let $H'_i = H_i \backslash X_i$. Notice that we have $|H'_i| \geq |H_i| - m \geq 0.8n$ as required by the claim. Using $n\geq 2(d+1)m$ and the fact that n is sufficiently large relative to $\Delta$, we have that for any $S \subseteq H'_i$ with $m \leq |S| \leq 2m$
\[ |N_{H'_i}(S)| \geq |N_{H_i}(S)| - m \geq 0.8 n - m \geq \frac{2}{3}n + 10 \Delta m.\]

Finally, suppose for sake of contradiction that there exists $S \subseteq H'_i$ with $|S| \leq m$ such that $|N_{H'_i}(S)| < 5\Delta |S|$. Then we have $|N_{H_i}(X_i \cup S)| < 5 \Delta |X_i \cup S|$ so that $m \leq |X_i \cup S| \leq 2m$ by maximality of $X_i$ and hence
\[ 10 \Delta m \geq 5 \Delta |X_i \cup S| > |N_{H_i}(X_i \cup S)| \geq 0.8 n ,\]
a contradiction to $n\geq 2(d+1)m$ and $n$ being sufficiently large relative to $\Delta$.
\end{proof}

Let $Z = G \backslash \bigcup_{i = 1}^{k-1}{H_i'}$.
Suppose there exists $v \in Z$ and $a \neq b$ such that $d_{H'_a}(v), d_{H'_b}(v) \geq \Delta$. Apply \Cref{L8} to $T$ in order to get a vertex $u$ and two forests $T_a$ and $T_b$ with no edges between them and $|T_a|, |T_b| \leq 2n/3$. 
We think of the trees in the forests $T_a$ and $T_b$ as being rooted at the neighbours of $u$. Let $t_a, t_b \leq \Delta$ be the number of neighbors of $u$ in $T_a$ and $T_b$ respectively. 
Now choose $X_a \subseteq H'_a \cap N(v)$ so that $|X_a| = t_a$ and $X_b \subseteq H'_b \cap N(v)$ so that $|X_b| = b$. We observe that for $i \in \{a,b\}$, for all $S \subseteq H'_i$ with $1 \leq |S| \leq m$, we have
\begin{equation}\label{EqSmallExpansion}
 | N_{H'_i}(S) \backslash X_i | \geq |N_{H'_i}(S)| - |X_i| \geq 5 \Delta |S| - t_i \geq 4 \Delta |S|.
\end{equation}
Because of the claim and (\ref{EqSmallExpansion}), $H'_a$ satisfies the assumptions of \Cref{L3} with $G=H_a$, $M=2n/3$, $t=t_a$, $X=X_a$, and $\{T_{x_1}, \dots, T_{x_{t}}\}$ the collection of trees in the forest $T_a$. Therefore we can apply \Cref{L3} to $H_a$ in order to find a copy of $T_a$ with its trees rooted in $X_a$.
By the same argument, $H_b$ has a copy of $T_b$ with its trees rooted in $X_b$. These copies of $T_a$ and $T_b$  together with the vertex $v$ give a copy of $T$ in $G$, so we are done.

Otherwise, for all $v \in Z$ there exists $i_v$ such that for all $j \neq i_v$, $d_{H'_j}(v) < \Delta$. We partition $G$ into $k-1$ parts via $G_i = H'_i \cup \{v \in Z : i_v = i\}$. Observe that for any $i \neq j$ and $S \subseteq G_i$, we have $|N(S) \cap H'_j| < \Delta |S|$. We claim that therefore $G_i^c$ has no $K^2_m$. Indeed suppose without loss of generality that $S_1$ was a copy of $K^2_m$ in $G_1^c$. Then for $j = 2, \ldots, k-1$, observing that $|H_j' \backslash N(S_1)|\geq |H_j'|- |N(S) \cap H'_j|\geq 0.8n - 2 \Delta m \geq m$, we can choose a set $S_j \subseteq H'_j \backslash N(S_1)$ of size $m$. Then $\bigcup_{i = 1}^{k-1}{S_i}$ is a copy of $K^k_m$ in $G^c$, a contradiction.

Now fix $i$ and observe that since $G_i^c$ has no $K^2_m$, we have that for any $S \subseteq G_i$ with $|S| \geq m$, $|N_{G_i}(S)| \geq |G_i| - |S|-m$. Now choose $Z_i \subseteq G_i$ with $|Z_i| \leq m-1$ maximal so that $|N_{G_i}(Z_i)| < d|Z_i|$ and let $G'_i = G_i \backslash Z_i$. Then we claim that for all $S \subseteq G'_i$ with $|S| \leq m$, $|N_{G'_i}(S)| \geq d|S|$. Indeed, otherwise we would have $|N_{G_i}(Z_i \cup S)| < d|Z_i \cup S|$, so by maximality of $X$ this would imply $m \leq |Z_i \cup S| \leq 2m$. But then
\[ 2dm \geq d | Z_i \cup S | > |N(Z_i \cup S)| \geq n - |Z_i \cup S| -m\geq n - 3m,\]
a contradiction. 

Now let $n'_i = |G'_i|$. If for some $i$, $n'_i \geq n$ then we can apply \Cref{L6} to conclude that $G'_i$ has a copy of $T$. Otherwise we have that $n'_i \leq n-1$ for all $i \in [k-1]$, and therefore using $|G|= (n-1)(k-1)+m_1$ we conclude $\sum_{i=1}^{k-1}{|Z_i|} \geq m_1$. For each $j = 1, \ldots, k-1$, we observe that 
\[ \left | N\left( \bigcup_{i=1}^{k-1}{Z_i} \right) \cap H'_j \right | \leq  |N(Z_j) \cap H'_j | +\sum_{i \neq j}{|N(Z_i ) \cap H'_j | } \leq d m+k \Delta m ,\] 
and hence
\[ \left | H'_j \backslash N \left ( \bigcup_{i=1}^{k-1}{Z_i} \right ) \right | \geq 0.8n - k \Delta m - d m \geq m.\]
Thus for each $j = 1, \ldots, k-1$ we can choose a set $S_j \subseteq H'_j \backslash N\left(\bigcup_{i =1}^{k-1}{Z_i} \right)$ of size $m_{j+1}\leq m$. But then by taking a subset $X \subseteq \bigcup_{i=1}^{k-1}{Z_i}$ of size $m_1$, we obtain that $X \cup \bigcup_{i=1}^{k-1}{S_i} $ is a copy of $K_{m_1, \dots, m_k}$ in $G^c$, a contradiction.
 \end{proof}

We can now complete the case $k \geq 3$ by using either \Cref{LemmaLinkedKmkFreeTree} or \Cref{L9}.

\begin{proof}[Proof of \Cref{TmainKmmmm} for $k \geq 3$]
Fix $m=m_k$, $d = 4 \cdot 10^{12} \frac{\log^4{n}}{\log{\log{n}}}, r = \lceil 10^3 \log^2{n} \rceil$ and $y = \lceil \log{n} \rceil$. We can choose $C_{\Delta,k}$ such that $n$ is sufficiently large relative to $\Delta, k$ and $n \geq 2(d+1)m$. Let $G$ be a graph with $(k-1)(n-1)+m_1$ vertices such that $G^c$ has no copy of $K_{m_1, \dots, m_k}$. 
Notice that in particular $G^c$ has no $K_m^k$.
If $T$ has at least $n/4r \geq 13 \Delta m + 1$ leaves, then by \Cref{L5} we are done. Otherwise, by \Cref{L1} $T$ has at least $n/4r$ disjoint bare paths of length $r$. 

\begin{claim}\label{ClaimFindQsets}
There are disjoint sets $Q'_1, \dots, Q'_{k-1}$ of size $\in [22 yn/r, 23 yn/r]$, and $W'_1, \dots, W'_{k-1}$ of size $\in [20 yn/r, 21 yn/r]$ such that for all $i$,  $W'_i\subseteq Q'_i$, $Q'_i$ $y$-expands into $W'_i$, and there are no edges between $Q'_i$ and $Q'_j$ for $i\neq j$.
\end{claim}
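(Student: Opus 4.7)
The plan is to construct the pairs $(Q_i', W_i')$ iteratively for $i = 1, \ldots, k-1$. At stage $i$, I work within a residual set $R_i \subseteq V(G)$ consisting of vertices that are not in any previously chosen $Q_j'$ and have no $G$-edges to $Q_1' \cup \cdots \cup Q_{i-1}'$; initially, $R_1 = V(G)$. The key invariant is that $G^c[R_i]$ contains no $K^{k-i+1}_m$: any such subgraph, combined with arbitrary $m$-subsets of $Q_1', \ldots, Q_{i-1}'$ (which are pairwise non-$G$-adjacent and hence form a $K^{i-1}_m$ in $G^c$), would produce a $K^k_m$ in $G^c$, contradicting the theorem's hypothesis. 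In particular, at stage $k-1$ the invariant yields $K^2_m$-freeness of $G^c[R_{k-1}]$.

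Within $R_i$, I locate $Q_i'$ of size in $[22yn/r, 23yn/r]$ with $y$-expansion into $W_i' \subseteq Q_i'$ of size in $[20yn/r, 21yn/r]$. For $i = k-1$, this is essentially the construction from the proof of \Cref{L6}: iteratively remove at most $m-1$ vertices of $R_{k-1}$ to kill small violators of the small-set expansion, while the large-set expansion follows from $K^2_m$-freeness together with $n \geq 2(d+1)m$. For $i < k - 1$, I drill further into $R_i$: while $G^c[R_i]$ contains a $K^2_L$ with $L \geq m$, I restrict to the larger side $R_i^\dagger$. The induced $G^c[R_i^\dagger]$ is then automatically $K^{k-i}_m$-free, for otherwise combining a $K^{k-i}_m$ in $R_i^\dagger$ with an $m$-subset of the other side (complete to $R_i^\dagger$ in $G^c$) and the previous $Q_j'$'s yields $K^k_m$ in $G^c$. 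After at most $k - i - 1$ such restrictions the sub-residual is $K^2_m$-free, and the standard expansion construction (as above) applies inside it.

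Finally, after fixing $Q_i'$, I check that $R_{i+1} = R_i \setminus (Q_i' \cup N_G(Q_i'))$ remains large enough for the next stage. Inside the drilled $K^2_m$-free sub-residual, large sets satisfy $|N_G(S)| \geq |\text{sub-residual}| - |S| - m$, so $|N_G(Q_i') \cap R_i|$ cannot exceed $|Q_i'| + m + O(\text{drill loss}) = o(n)$. Inductively this gives $|R_i| \geq (k-1)n - o(n)$ throughout, which is far more than $(k-i) \cdot 23yn/r$ of room needed to continue.

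The main obstacle I expect is the bookkeeping for the drilling procedure when $i < k-1$: each restriction to one side of a $K^2_L$ may shrink the residual substantially, and after up to $k - i - 1$ nested restrictions we still need a sub-residual of size at least $22yn/r$ in which $G^c$ is $K^2_m$-free. Here I will exploit the fact that in any $K^{t+1}_m$-free graph on $N$ vertices the largest $K^2_L$ satisfies $L \geq N/t$ in the worst case (as witnessed by blowups of the Turán graph), so after drilling the sub-residual retains size at least $|R_i|/(k-i)!$. For fixed $k$ this is still $\Theta(n)$, dwarfing $22yn/r = \Theta(n/\log n)$, but turning this heuristic into a clean quantitative argument at every stage is the technical heart of the proof.
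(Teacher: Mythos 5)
Your iterative ``drill down'' strategy differs fundamentally from the paper's, which obtains all $k-1$ sets simultaneously: the paper applies \Cref{C2} with parameters $(k-1, q)$ to exploit a win--win (either $G^c$ contains $K^{k-1}_q$, yielding the parts $Q_1,\dots,Q_{k-1}$ with no $G$-edges between them for free, or else $G$ directly contains a copy of $T$ and we are done), and then prunes each $Q_i$ by removing a small set $X_i$ to get expansion, exactly as in \Cref{L6}. Your approach instead builds $Q_i'$ one at a time inside a shrinking residual.

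The drilling step has a genuine gap. You invoke the ``fact'' that every $K^{t+1}_m$-free graph on $N$ vertices contains a $K^2_L$ with $L\geq N/t$. This is false. Take $G^c$ on $N$ vertices to consist of two sets $A,B$ of size $m$ with all $A$--$B$ edges present and no other edges: this graph is $K^3_m$-free (indeed $K^3_1$-free, as vertices outside $A\cup B$ are isolated), yet its largest $K^2_L$ has $L=m$, which is $o(N)$. So drilling into the larger side of a maximal $K^2_L$ may land you on a set of size only $m\ll 22yn/r$, and the residual can collapse in a single step; the Turán blowup is not the extremal structure here. You flag this as ``the technical heart'' you haven't yet carried out, but it is not a quantitative nuisance --- the claimed inequality simply does not hold. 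A related concern is the residual bookkeeping: you pass from $R_i$ to $R_{i+1}=R_i\setminus(Q_i'\cup N_G(Q_i'))$ and assert $|N_G(Q_i')\cap R_i|=o(n)$, but the $K^2_m$-freeness of $G^c$ restricted to your sub-residual \emph{lower}-bounds $|N_G(S)|$ for $|S|\geq m$, so if anything it says $Q_i'$ has \emph{many} $G$-neighbours in $R_i$, and $R_{i+1}$ could shrink drastically. The paper avoids both obstacles by never restricting to a non-neighbourhood: the $K^{k-1}_q$ supplied by \Cref{C2} already has the no-edges-between-parts property built in.
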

\begin{proof}
Let $q = 23 yn/r$ and $w= 21 yn/r$.
Since $n$ is sufficiently large relative to $k, \Delta$ and $r = \lceil 10^3 \log^2{n} \rceil$ we have $(n-1)(k-1) + m_1 \geq (k-2)(n + 13 \Delta q) + q$. Therefore we can apply \Cref{C2} to conclude that either $G$ contains a copy of $T$ so that we are done, or else there exists a copy of $K^{k-1}_{q}$ in $G^c$. 
Label the parts of $K_q^{k-1}$ by $Q_1, \ldots, Q_{k-1}$. Observe that clearly $Q_i^c$ has no copy of $K^2_m$. For each $i$, let $W_i \subseteq Q_i$ be a set of size $w$. Now choose $X_i \subseteq Q_i$ with $|X_i| \leq m-1$ maximal so that $|N_{Q_i}(X_i) \cap W_i| < y |X_i|$ and 
let $Q'_i = Q_i \backslash X_i$, and 
$W'_i = W_i \backslash X_i$. 
We claim that for all $S \subseteq Q'_i$ with $|S| \leq m$, $|N_{Q'_i}(S) \cap W'_i| \geq y |S|$. Indeed, otherwise we would have $|N_{Q_i}(X_i \cup S) \cap W_i| < y|X_i \cup S|$ so that $m \leq |X_i \cup S| \leq 2m$ by maximality of $X_i$. But then since $Q_i^c$ has no $K^2_m$,
\[ 2ym \geq y |X_i \cup S| >  |N_{Q_i}(X_i \cup S) \cap W_i| \geq w - |X_i \cup S| - m \geq w - 3m, \]
a contradiction to $n\geq 2(d+1)m$. 
Note that since $m \leq y n / r$, we have $|Q'_i| \geq q - m \geq 22 y n / r$ and $|W'_i| \geq w-m \geq 20 y n / r$.  We further conclude that $Q'_i$ $y$-expands into $W'_i$. Indeed, since $Q_i'^c$ does not have $K^2_m$ we have that for any $S \subseteq Q'_i$ with $m \leq |S| < \left \lceil \frac{w}{2y} \right \rceil$,
\[ |N_{Q'_i}(S)\cap W_i'| \geq |W'_i| - |S| - m \geq w - 2m - \frac{w}{2y} \geq \frac{w}{2} \geq y |S|, \]
so the first condition holds. Moreover, since $Q_i'^c$ does not have $K^2_m$ and $\lceil w/2y \rceil \geq m$, the second condition holds as well.
\end{proof}

Now let $M_i=Q_i'\setminus W_i'$ and note that $y n / r \leq |M_i| \leq 3 y n / r$. For $i\neq j$, fix a maximal family $\mathcal P_{i,j}$ of $\leq 8 k n / r$  vertex-disjoint paths of length $\leq 3$ from $M_i$ to $M_j$ internally outside $R_1 = \bigcup_{i=1}^{k-1}{Q'_i}$.
Let $F$ be an auxiliary graph on $[k-1]$ with $ij$ an edge whenever $|\mathcal P_{i,j}| = 8 k n / r$.  
Let $R_2=\bigcup_{i\neq j} P_{i,j}$ and $R=R_1\cup R_2$.  Note that $|R_1| \leq 23 k y n / r$ and $|R_2| \leq 8k^3 n / r$ so that $|R| \leq 24 k y n / r$ (since $y\geq 8k^2$ as a consequence of $n$ being sufficeintly large relative to $k$.) Now let $M_i' = M_i \backslash R_2$ and note that $|M'_i| \leq |M_i| \leq 3 y n / r$ and 
\[  |M'_i| \geq |M_i| - |R_2| \geq y n / r - 8 k^3 n / r \geq 2 n / r \geq m.\]

Note that $|Q_i'| \leq 23 y n / r$, so $160 \log{|Q'_i|} / \log{\log{|Q'_i|}} \leq \log{n} \leq y$ and hence by \Cref{subsetexpand} $(iii)$, we have that $Q_i'$ $160 \log{|Q'_i|} / \log{\log{|Q'_i|}}$-expands into $W'_i$. Moreover
\[ |Q'_i| \geq 22 \frac{yn}{r} \geq 21 \frac{yn}{r} + \frac{n}{r} \geq |W'_i| + 2 \frac{n}{2r},\]
so we may apply \Cref{LemmaExpanderLinked} with $s = n / 2r$. Since
\[ y \leq \frac{|W'_i|}{40 (n / 2r )} \qquad \text{ and } \qquad  4 \left \lceil \frac{\log{|Q'_i|}}{\log{\log{|Q'_i|}}} \right \rceil \leq 4 \left \lceil \frac{\log{n}}{\log{\log{n}}} \right \rceil \leq \frac{y}{k} - 3,\]
we conclude that $(M_i, W_i')$ is a $(n/2r, y/k - 3, y)$-linked system and hence so is $(M'_i, W'_i)$. We now consider two cases depending on whether $F$ is empty or not.

\textbf{Case 1:} Suppose that $F$ is not empty. 
Let $F'$ be the largest connected component of $F$ and let $k' = |F'| + 1$.  Since $F$ is not empty we have $k' \geq 3$. 
%Now since $M_i$ originally had $kp + u + m$ vertices, it still has at least $u + m$ vertices and so we may take a set $U_i \subset M_i$ of size $u$ and let $M'_i = M_i \backslash U_i$ have at least $m$ vertices. 
Let $G' = G \backslash \bigcup_{i \in F'}{(M'_i \cup N(M_i'))}$. 

\textbf{Case 1.1:} Suppose that $|G' | \geq (k - k')(n + 13 \Delta m) + m$.  Then $G'^c$ has no $K^{k-k' + 1}_m$ or else we could take it together with subsets of $M'_i : i \in F'$ of size $m$ to obtain a $K^k_m$ in $G^c$, a contradiction. But then $G'$ contains a copy of $T$ by \Cref{C2}. 

\textbf{Case 1.2:} Suppose that $|G' | < (k - k')(n + 13 \Delta m) + m$. Then since $m=o(n)$, we have
\[ \left| \bigcup_{i \in F'}{M'_i}\cup N(M'_i) \right| > (k - 1)(n-1) + m_1 - (k-k')(n+13 \Delta m ) - m = (k' - 1)(n-1)(1 - o(1) ).\]
So if we let $Z =  \bigcup_{i \in F'}N(M_i')\backslash R$, we obtain
\begin{align*}
|Z| \geq \left| \bigcup_{i \in F'}{N(M'_i)} \right| - |R| &\geq \left| \bigcup_{i \in F'}{M'_i}\cup N(M'_i) \right| - \left| \bigcup_{i \in F'}{M'_i} \right| - |R|\\
&\geq (k'-1)(n-1)(1 - o(1)) - 3 \frac{ k y n }{ r } - 24 \frac{k y n}{r}\\
&\geq 0.99(k'-1)n.
\end{align*}
Moreover, if we let $X = \bigcup_{i \in F'}{M'_i}$ then we claim $(Z \cup X)^c$ has no $K^{k'}_m$. 
Indeed, since $ij \notin E(F)$ for any $i \in F', j \notin F'$, we could take subsets of $M'_i : i \notin F'$ of size $m$, together with a copy of $K^{k'}_m$ in $(Z \cup X)^c$ to obtain a copy of $K^k_m$ in $G$. Since $F'$ is connected, Lemma~\ref{LemmaJoinManyLinkedSets} applied with $d^-= y/k-3$, $d^+=y$, $s=n/2r$, and $k=k'$ implies that $(X, W)$ is a $(n/2r, y, y)$-linked system for $W = R_2\cup \bigcup_{i \in F'}{W'_i}$.
Thus we may apply Lemma~\ref{LemmaLinkedKmkFreeTree}  to conclude that $G$ contains a copy of $T$.

\textbf{Case 2:} Suppose that $F$ is empty. Note that if $ij \notin E(F)$ then we must have no edges between $M'_i \cup ( N(M'_i) \backslash R )$ and $M'_j \cup ( N(M'_j) \backslash R )$ by the maximality of the family of paths $\mathcal{P}_{i,j}$. Thus if we define $H_i = N(M'_i) \backslash R$, then $H_1, \ldots, H_{k-1}$ are disjoint and there are no edges between $H_i$ and $H_j$, for all $i \neq j$. 
Fix some $i\in \{1, \dots, k-1\}$.
Since $|M'_i| \geq m$, we have that $( G \backslash ( N(M'_i) \cup M'_i ) )^c$ does not contain $K^{k-1}_m$ or else we could take it together with a subset of $M'_i$ of size $m$ to obtain a $K^k_m$ in $G^c$, a contradiction. Thus if $|G \backslash (N(M'_i) \cup M'_i) | \geq (k-2)(n + 13 \Delta m) + m$, then $G \backslash (N(M'_i) \cup M'_i)$ has a copy of $T$ by \Cref{C2}, so we are done. Otherwise we have 
\[ |N(M'_i) \cup M'_i| \geq (n-1)(k-1)+m_1- ((k-2)(n+13\Delta m) +m = n - (k-2)(13 \Delta m + 1) + m_1 - m = n(1 - o(1) ),\] 
so that $|N(M'_i)| \geq n(1 - o(1)) - 3 y n / r = n( 1 - o(1) )$
and hence
\[ |H_i| \geq |N(M'_i)| - |R| \geq n(1 - o(1)) - 24 k y n / r \geq 0.9 n.\]
This holds for all $i$, so we can apply \Cref{L9} to conclude that $G$ contains a copy of $T$.
\end{proof}

\section{Concluding Remarks}

In this paper we determined the range in which bounded degree trees are $H$-good, up to logarithmic factors. However, we conjecture that these factors can be removed to obtain the following.

\begin{conjecture} \label{Cmain}
For all $\Delta$ and $k$ there exists a constant $C_{\Delta, k}$ such for any tree $T$ with max degree at most $\Delta$ and any $H$ with $\chi(H) = k$ satisfying $|T| \geq  C_{\Delta, k} |H| $, $T$ is $H$-good.
\end{conjecture}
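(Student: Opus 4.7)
The logarithmic slack in \Cref{TmainKmmmm} is traceable to a single source: Montgomery's spanning bare-path embedding (\Cref{T1}, and transitively \Cref{LemmaExpanderLinked}) demands an expansion factor of order $d = \log^4 n / \log \log n$, and the $K^2_m$-freeness of $G^c$ only guarantees expansion of order $d$ on small sets once $n \geq 2(d+1)m$. Reducing $H$ to $K_{m_1, \ldots, m_k}$ as in the passage leading to \Cref{TmainKmmmm}, \Cref{Cmain} therefore reduces to improving bare-path embedding in $K^2_{m_k}$-free hosts so that constant expansion, rather than polylogarithmic expansion, suffices.

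The plan would be to retain the overall case analysis of the current proof. Many-leaf trees are already handled in linear regime by \Cref{Tmanyleaves}, so only the few-leaves case requires new input. For this case one invokes \Cref{L1} with $r = r(\Delta, k)$ a large constant, obtaining $\Omega(n)$ disjoint bare paths of constant length $r$. The reduced tree $T'$ obtained by contracting these paths still has size $\Theta(n/r) = \Theta(n)$; it can be embedded into the host using Haxell's lemma exactly as in \Cref{C1}, leaving $\Omega(n)$ residual endpoint pairs $(x_i, y_i)$ to be joined by internally disjoint paths of length $r$. The new ingredient required is a constant-length, constant-expansion path-packing lemma: in an $n$-vertex graph whose complement is $K^2_m$-free with $n \geq Cm$, any $n/(2r)$ prescribed disjoint endpoint pairs can be connected by internally disjoint paths of prescribed constant lengths. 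For $k \geq 3$ the same replacement would be slotted into the construction of the linked system in Case~1 and into the proof of \Cref{L6} in Case~2, so the sizes of the auxiliary sets $Q'_i, W'_i$ shrink from $\Theta(yn/r)$ to $\Theta(n/r)$ and the driving condition $n \geq 2(d+1)m_k$ collapses to $n \geq C m_k$.

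The principal obstacle is establishing this constant-expansion path-packing lemma. P\'osa rotation, the workhorse of Montgomery's arguments, is essentially tight: the rotation tree must have depth $\Omega(\log n)$ to produce even a single long path, and iterating over $\Omega(n)$ pairs only amplifies the cost. Two plausible routes suggest themselves. The first is a regularity-based approach: apply Szemer\'edi's regularity lemma to $G$ to obtain a dense reduced graph in which one routes the connecting paths via the Blow-up Lemma, coordinating across all $\Omega(n)$ pairs simultaneously while respecting vertex-disjointness and hitting exactly the prescribed endpoints; the delicate point is to carry out this matching without incurring hidden logarithmic overheads from the regularity partition. The second is an absorption scheme: reserve at the outset a small absorber inside the host that can swallow any $o(n)$ leftover vertices by local rerouting of already-embedded bare paths, and then produce an approximate cover using only constant expansion; here the difficulty is to construct an absorber inside a $K^2_m$-free host whose size and flexibility do not themselves introduce a log factor. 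I would expect either route to require substantial new combinatorial input beyond the tools developed in the present paper.
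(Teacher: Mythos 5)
This statement is stated in the paper as a \emph{conjecture} (\Cref{Cmain}); the paper itself offers no proof of it, only the weaker \Cref{TmainKmmmm} with the extra $\log^4$ factor. So there is no ``paper proof'' against which to check you, and your proposal, read as a proof, has a genuine and essentially unbridged gap.

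Your diagnosis of the bottleneck is correct and agrees with the paper's own remarks: the $\log$ factors enter solely through Montgomery's path-packing machinery (\Cref{T1} and \Cref{LemmaExpanderLinked}), which needs $d$-expansion with $d = \Theta(\log^4 n / \log\log n)$, and the $K^2_m$-freeness of $G^c$ only yields expansion $d$ once $n \geq 2(d+1)m$. Your observation that \Cref{Tmanyleaves} already handles the many-leaves case in the linear regime is also correct, so the few-leaves case is indeed the only obstruction. But the central object you invoke --- a ``constant-expansion, constant-length, near-spanning path-packing lemma'' allowing $\Omega(n)$ prescribed endpoint pairs to be joined by internally disjoint paths of constant length $r$ in a $K^2_m$-free host with $n \geq Cm$ --- is not proven, not cited, and would itself be a substantial new theorem. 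You acknowledge this explicitly, noting that both the regularity/blow-up route and the absorption route ``require substantial new combinatorial input beyond the tools developed in the present paper.'' That honesty is appropriate, but it means what you have written is a research plan identifying what a proof would need, not a proof. In particular, the near-spanning requirement (the paths must cover essentially all of $W$ exactly, as in \Cref{T1}) is precisely where P\'osa rotation, absorption, and regularity all incur costs that are nontrivial to control in this sparse, $K^2_m$-free setting, and you give no argument that any of them can be made to work with only constant expansion. Until such a lemma is actually established, \Cref{Cmain} remains open.
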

\noindent Pokrovskiy and Sudakov \cite{PSpaths} showed that \Cref{Cmain} holds for paths, and our \Cref{Tmanyleaves} shows that \Cref{Cmain} holds for trees with linearly (in $|H|$) many leaves.

Finally, we note that \Cref{Cmain} is best possible up to a constant factor. Indeed, consider the graph consisting of $2k-1$ red cliques of size $n-1$, with all other edges blue. It clearly has no red tree $T$ on $n$ vertices and if $m = n$, then it is not hard to see that it has no copy of $K^k_m$. Thus $R(T, K^k_m) \geq (2k-1)(n-1) + 1 > (k-1)(n-1) + m$, so that $T$ is not $K^k_m$-good.

\section*{Appendix}
Our goal will be to prove \Cref{L3}. This is a generalization of Haxell's theorem \cite{Hax01}, and the proof follows the method of Friedman and Pippenger \cite{FP}. The idea is to prove a stronger statement from which \Cref{L3} will follow as a corollary. For this, we will also need a slightly different definition of neighborhood. For a vertex $x$ in a graph $G$, let $\Gamma(x) = N(x)$ be the neighborhood of $x$ and for a set of vertices $S$ in $G$, define $\Gamma(S) = \bigcup_{x \in S}{\Gamma(x)}$. Also, for a tree $T$ rooted at $v$, we define $d_{root}(T) = d_T(v)$.

\begin{lemma} \label{technical}
Let $\Delta, M, t$ and $m$ be given. Let $X = \{x_1, \ldots, x_t\}$ be a set of vertices in a graph $G$. Suppose that we have rooted trees $T_{x_1}, \ldots, T_{x_t}$ satisfying $\sum_{i=1}^t{\left|T_{x_i}\right|} \leq M$ and $\Delta \left(T_{x_i} \right) \leq \Delta$ for all $i$. Suppose that for all $S$ with $m \leq |S| \leq 2m$ we have $|\Gamma(S)| \geq M + 10 \Delta m$, and for $S$ with $|S| \leq m$ we  have
\begin{equation}\label{e1}
|\Gamma(S)\setminus X|\geq 4\Delta|S\setminus X|+ \sum_{x\in S\cap X} \Big(d_{root}\big(T_x\big) + \Delta\Big).
\end{equation}
Then we find disjoint copies of the trees $T_{x_1}, \ldots, T_{x_t}$ in $G$ such that for each $i$, $T_{x_i}$ is rooted at $x_i$. In addition for all $S \subseteq G$ with $|S| \leq m$, we have
\begin{equation}\label{e2}
\left| \Gamma(S)\setminus \big(T_{x_1}\cup\dots\cup T_{x_t}\big) \right| \geq \Delta|S|.
\end{equation}
\end{lemma}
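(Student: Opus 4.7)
The proof proceeds by iteratively embedding the forest vertex by vertex in the style of Friedman--Pippenger and Haxell, maintaining a Hall-type expansion invariant that incorporates a ``deficiency'' at each already-embedded vertex. At each stage the partial embedding is encoded by the currently used vertex set $F \subseteq V(G)$ together with a function $a:F\to\{0,1,\dots,\Delta\}$, where $a(v)$ is the number of tree-neighbours of the tree-vertex mapped to $v$ that have not yet been embedded. Initially $F = X$ with $a(x_i) = d_{\text{root}}(T_{x_i})$, and throughout the procedure I maintain the invariant
\begin{equation*}
\mathcal I(F,a):\qquad |\Gamma(S)\setminus F|\;\geq\;4\Delta|S\setminus F|\;+\;\sum_{v\in S\cap F}\bigl(a(v)+\Delta\bigr) \qquad \text{for all } S\subseteq V(G),\;|S|\leq m.
\end{equation*}
The initial condition $\mathcal I(X,a_0)$ is precisely hypothesis~(\ref{e1}), and when the embedding is complete (so $a\equiv 0$) $\mathcal I$ reads $|\Gamma(S)\setminus F|\geq \Delta|S|$, which is the desired~(\ref{e2}).

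The content of the argument is the \emph{extension step}: given $v_0\in F$ with $a(v_0)\geq 1$, I must find $u\in N(v_0)\setminus F$ so that, after setting $F^+ = F\cup\{u\}$, $a^+(v_0)=a(v_0)-1$, and $a^+(u)=(\text{number of remaining tree-children of the new vertex})\leq\Delta-1$, the invariant $\mathcal I(F^+,a^+)$ still holds. Writing the slack $W(S):=|\Gamma(S)\setminus F|-4\Delta|S\setminus F|-\sum_{v\in S\cap F}(a(v)+\Delta)$, a direct two-case computation (split on whether $u\in S$ or not) shows that $\mathcal I(F^+,a^+)$ is automatically preserved unless $u\in \Gamma(S)$ for some \emph{tight} $S$ (i.e.\ $W(S)=0$) with $v_0\notin S$ and $|S|\leq m$; the case $u\in S$ is harmless because the $-4\Delta$ coming from $u$ leaving $S\setminus F$ dominates the $+a^+(u)+\Delta\leq 2\Delta-1$ coming from $u$ joining $S\cap F^+$.

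To find such a $u$, I use that $W$ is submodular: $|\Gamma(\cdot)\setminus F|$ is submodular and the remaining terms are modular. Consequently, the family $\mathcal T$ of tight sets of size at most $m$ avoiding $v_0$ is closed under union whenever the union itself has size at most $m$. Let $S^*$ be a member of $\mathcal T$ of maximum size. If $|S^*|\leq m-1$, apply $\mathcal I$ to $S^*\cup\{v_0\}$: tightness of $S^*$ and the fact that $v_0\in F$ give
\[|\Gamma(S^*\cup\{v_0\})\setminus F|\;\geq\;|\Gamma(S^*)\setminus F|+a(v_0)+\Delta,\]
so $v_0$ contributes at least $a(v_0)+\Delta\geq\Delta+1$ fresh neighbours lying in $N(v_0)\setminus(F\cup \Gamma(S^*))$. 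If $|S^*|=m$, instead use the global bound $|\Gamma(S^*\cup\{v_0\})|\geq M+10\Delta m$ together with $|F|\leq M$ and the tightness-plus-$a(v)\leq\Delta$ bound $|\Gamma(S^*)\setminus F|\leq 6\Delta m$ to again produce at least $4\Delta m$ neighbours of $v_0$ outside $F\cup\Gamma(S^*)$. Pick $u$ from this excess. By submodularity and maximality of $S^*$, any other $S\in\mathcal T$ is contained in $S^*$ (its union with $S^*$ would otherwise be a tight member of $\mathcal T$ strictly larger than $S^*$), hence $u\notin\Gamma(S)\subseteq\Gamma(S^*)$, and $\mathcal I$ is preserved.

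Iterating this extension until every deficiency is zero produces the desired disjoint embedded copies of the $T_{x_i}$, and the resulting invariant delivers~(\ref{e2}). The main obstacle is the extension step---both the combinatorial identification of the ``bad'' tight sets that $u$ must avoid and the two-regime argument (small-set invariant versus large-set global bound) that certifies the existence of a valid $u$; once these are assembled, the overall induction is routine.
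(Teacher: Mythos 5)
Your proposal is essentially the same Friedman--Pippenger/Haxell argument as the paper's, just reformulated: the paper induces on $\sum_i e(T_{x_i})$ by detaching one subtree and passing the reduced forest back to the inductive hypothesis, while you unroll that induction into a one-vertex-at-a-time greedy extension with an explicit deficiency function $a(\cdot)$ and invariant $\mathcal I(F,a)$. The two formulations are equivalent; your $a(v)$ plays exactly the role of $d_{root}(T^v_x)$ in the paper's modified tree families, and your ``tight sets'' are the paper's ``critical sets.'' The two-case analysis (``$u\in S$ harmless'' vs.\ ``$u\notin S$ can only break when $u\in\Gamma(S)$ for a tight $S$ avoiding $v_0$'') is correct, and the first extension subcase ($|S^*|\leq m-1$: apply $\mathcal I$ to $S^*\cup\{v_0\}$) and the second subcase ($|S^*|=m$: use the $M+10\Delta m$ bound together with $|F|\leq M$ and $|\Gamma(S^*)\setminus F|\leq 4\Delta m$) both go through.

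There is, however, one elided point that must actually be verified. You conclude ``any other $S\in\mathcal T$ is contained in $S^*$, since its union with $S^*$ would otherwise be a tight member of $\mathcal T$ strictly larger than $S^*$.'' But you only established closure of $\mathcal T$ under union \emph{when the union has size at most $m$}, and $|S\cup S^*|$ can a priori be as large as $2m$; if $|S\cup S^*|>m$ the union simply is not in $\mathcal T$ and no contradiction with maximality follows. This is not a cosmetic issue: without it the chosen $u$ could lie in $\Gamma(S)$ for some tight $S\not\subseteq S^*$. The fix is exactly the paper's Claim \ref{CriticalUnion}: submodularity gives $W(S\cup S^*)\leq 0$ for any $|S\cup S^*|\leq 2m$, which forces $|\Gamma(S\cup S^*)\setminus F|\leq 8\Delta m$ and hence $|\Gamma(S\cup S^*)|\leq |F|+8\Delta m < M+10\Delta m$; the hypothesis on sets of size between $m$ and $2m$ then rules out $|S\cup S^*|>m$, so the union is in fact in $\mathcal T$. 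Once you insert this step, your argument is complete and matches the paper's.
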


\begin{proof}
The proof is by induction on $\sum_{i=1}^t e(T_{x_i})$. The initial case is when each tree is just a single vertex which holds by embedding $T_{x_i}$ to $x_i$.
Then (\ref{e2}) holds as a consequence of (\ref{e1}).
Now suppose that the lemma holds for all families of trees with $\sum_{i=1}^t e(T_{x_i})<e$ and we have a family with $\sum_{i=1}^t e(T_{x_i})=e>0$. Without loss of generality, we may assume that $e(T_{x_1})\geq 1$. 
Let $r$ be the root of $T_{x_1}$ and $c$ one of its children. 
For every $v \in \Gamma(x_1)$ we define a  set $X^v=X\cup \{v\}$ and a corresponding family of rooted trees $\{T^v_x : x\in X^v\}$ as follows.
Let $T^v_{x_1}$ be the subtree of $T_{x_1}$ rooted at $r$ formed by deleting $c$ and its children. Let $T^v_{v}$ be the subtree of $T_{x_1}$ rooted at $c$ formed by $c$ and its children. For all $x\in X^v-x_1-v$, let $T^v_x = T_x$.

Suppose that there is a vertex $v\in \Gamma(x_1)\setminus X$ such that the set $X^{v}$ together with the family of trees $\{T^{v}_x : x\in X^{v}\}$ satisfy the following for every $C \subseteq G$ with $|C| \leq m$.
\begin{equation*}
|\Gamma(C)\setminus X^v|\geq 4\Delta|C\setminus X^v|+ \sum_{x\in C\cap X^v} \Big(d_{root}\big(T^{v}_x\big) + \Delta\Big).
\end{equation*}
Then, by induction we have an embedding of $T^{v}_{x_1},\dots, T^{v}_{x_t}, T^{v}_v$ into $G$ which satisfies (\ref{e2}). By adding the edge $x_1 v$, we can join the trees $T^{v}_{x_1}$ and $T^{v}_v$ in order to obtain a copy of $T_{x_1}$ rooted at $x_1$. This gives an embedding of $T_{x_1},\dots, T_{x_t}$ into $G$ which satisfies (\ref{e2}).

Otherwise, for every $v\in \Gamma(x_1) \backslash X$, there is a set $C_v$ with $|C_v| \leq m$ and 
\begin{equation}\label{e3}
|\Gamma(C_v)\setminus X^v|\leq  4\Delta|C_v\setminus X^v|+ \sum_{x\in C_v\cap X^v} \Big(d_{root}\big(T^v_x\big) + \Delta\Big)-1.
\end{equation}
Notice that taking $S=\{x_1\}$, (\ref{e1}) implies that $x_1$ has at least one neighbour outside of $X$.
Define a set of vertices $S$ to be \emph{critical} if it has order  $\leq m$ and equality holds in (\ref{e1}).
\begin{claim}\label{ClaimCvCritical}
For every $v\in \Gamma(x_1) \backslash X$, the set $C_v$ is critical, and also $v\in \Gamma(C_v)$ and $x_1\not\in C_v$.
\end{claim}
\begin{proof}
Notice that the following hold.
\begin{align}
|\Gamma(C_v)\setminus X|-1&\leq |\Gamma(C_v)\setminus X^v|,\label{LHSinequality}\\
4\Delta|C_v\setminus X^v|+ \sum_{x\in C_v\cap X^v} \Big(d_{root}\big(T^v(x)\big) + \Delta\Big)&\leq 4\Delta|C_v\setminus X|+ \sum_{x\in C_v\cap X} \Big(d_{root}\big(T(x)\big) + \Delta\Big).\label{RHSinequality}
\end{align}
 Adding   (\ref{e3}), (\ref{LHSinequality}),  (\ref{RHSinequality}), and (\ref{e1}) applied with $S=C_v$ gives ``$0\leq 0$'' which implies that equality holds in each of these inequalities. In particular equality holds in~(\ref{e1}), which implies that $C_v$ is critical.
For equality in (\ref{LHSinequality}) to hold, we must have $v\in \Gamma(C_v)$. For equality in (\ref{RHSinequality}) to hold, we must have $x_1\not\in C_v$ (since $d_{root}\big(T^v_{x_1}\big)=d_{root}\big(T_{x_1}\big)-1$.)
\end{proof}
We remark that the above proof also gives $v\not\in C_v$, although this will not be needed in the proof.
We'll also need the following claim.
\begin{claim}\label{CriticalUnion}
For two critical sets $S$ and $T$, the union $S\cup T$ is critical.
\end{claim}
\begin{proof}
First we show that the reverse of the inequality (\ref{e1}) holds for $S\cup T$.
We have the following
\begin{equation}\label{es}
|\Gamma(S)\setminus X|= 4\Delta|S\setminus X|+ \sum_{x\in S\cap X} \Big(d_{root}\big(T(x)\big) + \Delta\Big).
\end{equation}
\begin{equation}\label{et}
|\Gamma(T)\setminus X|= 4\Delta|T\setminus X|+ \sum_{x\in T\cap X} \Big(d_{root}\big(T(x)\big) + \Delta\Big).
\end{equation}
\begin{equation}\label{esit}
|\Gamma(S\cap T)\setminus X|\geq 4\Delta|S\cap T\setminus X|+ \sum_{x\in S\cap T\cap X} \Big(d_{root}\big(T(x)\big) + \Delta\Big).
\end{equation}
Equations (\ref{es}) and (\ref{et}) come from $S$ and $T$ being critical, whereas (\ref{esit}) is just (\ref{e1}) applied to $S\cap T$ (which is smaller than $m$ since $S$ is critical.)
Also, note that by inclusion-exclusion, we have
\begin{align}
|S\cup T\setminus X|&= |S\setminus X|+ |T\setminus X|-|S\cap T\setminus X|, \label{ea2}\\
\begin{split}
\sum_{x\in (S\cup T)\cap X} \Big(d_{root}\big(T(x)\big) +  \Delta\Big)&= \sum_{x\in S\cap X} \Big(d_{root}\big(T(x)\big) + \Delta\Big)+\sum_{x\in T\cap X} \Big(d_{root}\big(T(x)\big) + \Delta\Big)\\
& \ \hspace{4cm}-\sum_{x\in S\cap T\cap X} \Big(d_{root}\big(T(x)\big) + \Delta\Big).
\end{split}\label{ea3}
\end{align}
Moreover, we observe that
\begin{align*}
| \Gamma(S \cup T) \backslash X| &= |( \Gamma(S) \cup \Gamma(T) ) \backslash X |, \\
\begin{split}
| \Gamma(S \cap T) \backslash X| &\leq |( \Gamma(S) \cap \Gamma(T) ) \backslash X |, 
\end{split}
\end{align*}
which together with inclusion-exclusion implies
\begin{equation}
| \Gamma(S \cup T) \backslash X| \leq |\Gamma(S) \backslash X| + |\Gamma(T) \backslash X| - |\Gamma(S \cap T) \backslash X|. \label{ea1}
\end{equation}

Plugging (\ref{es}), (\ref{et}), and (\ref{esit}) into (\ref{ea1}), and then using (\ref{ea2}) and (\ref{ea3}) gives 
\begin{equation}\label{el}
|\Gamma(S\cup T)\setminus X|\leq 4\Delta|S\cup T\setminus X|+ \sum_{x\in (S\cup T)\cap X} \Big(d_{root}\big(T(x)\big) + \Delta\Big).
\end{equation}
Since both $S$ and $T$ are critical we have $|S\cup T|\leq 2m$, which together with (\ref{el}) implies that $|\Gamma(S\cup T)|\leq |X|+|\Gamma(S\cup T)\setminus X|\leq |X|+8\Delta m <M+10\Delta m$. By the assumption of the lemma we have $|S\cup T|\leq m$.
Therefore (\ref{e1}) holds for the set $S\cup T$ which together with (\ref{el}) implies that $S\cup T$ is critical. 
\end{proof}

Let $C=\bigcup_{v\in \Gamma(x_1) \backslash X} C_v$. By the two claims, we have that $C$ is critical. Since from the first claim $\Gamma(x_1) \backslash X \subseteq \Gamma(C)$ and $x_1\not\in C$, we have that 
\begin{align*}
|\Gamma(C\cup\{x_1\})\setminus X|&=|\Gamma(C)\setminus X|\\
&= 4\Delta|C\setminus X|+ \sum_{x\in C\cap X} \Big(d_{root}\big(T(x)\big) + \Delta\Big)\\
&< 4\Delta|C\setminus X|+ \sum_{x\in C\cap X} \Big(d_{root}\big(T(x)\big) + \Delta\Big) + d_{root}(T(x_1))+\Delta\\
&= 4\Delta|(C\cup\{x_1\})\setminus X|+ \sum_{x\in (C\cup\{x_1\})\cap X} \Big(d_{root}\big(T(x)\big) + \Delta\Big).
\end{align*}
By (\ref{e1}) we have that $|C\cup\{x_1\}|>m$, which combined with $C$ being critical means that $|C\cup\{x_1\}|=m+1$. But then $|\Gamma(C\cup\{x_1\})|\leq |X|+|\Gamma(C\cup\{x_1\})\setminus X|\leq |X|+8\Delta m$ contradicts the assumption of the lemma that $|\Gamma(C\cup\{x_1\})|\geq M+10\Delta m$.
\end{proof}

\begin{proof}[Proof of \Cref{L3}] 
Note that since $|\Gamma(S)| \geq |N(S)|$ and $\sum_{x \in S \cap X}{\left( d_{root}(T_x) + \Delta \right)} \leq 4 \Delta | S \cap X|$ for all $S$, we may apply \Cref{technical} to obtain copies of $T_{x_1}, \ldots, T_{x_t}$ rooted at $x_1, \ldots, x_t$ respectively so that (\ref{e2}) holds for all $S$ with $|S| \leq m$. In particular, if $S \subseteq T_{x_1} \cup \ldots \cup T_{x_t}$ and $|S| \leq m$ then
\[ \left | N(S) \backslash \left( T_{x_1} \cup \ldots \cup T_{x_t} \right) \right | = \left| \Gamma(S) \backslash \left( T_{x_1} \cup \ldots \cup T_{x_t} \right) \right | \geq \Delta |S|. \qedhere \]
\end{proof}

\end{document}